\numberwithin{equation}{section}
\theoremstyle{plain}
\newtheorem{thm}{Theorem}[section]
\newtheorem{lemma}[thm]{Lemma}
\newtheorem{prop}[thm]{Proposition}
\newtheorem{coroll}[thm]{Corollary}
\theoremstyle{definition}
\newtheorem{defn}[thm]{Definition}
\newtheorem{problem}[thm]{Problem}
\newtheorem{question}[thm]{Question}
\newtheorem{remark}[thm]{Remark}
\newtheorem{ex}[thm]{Example}
\newcommand{\R}{\mathbb{R}}
\newcommand{\Z}{\mathbb{Z}}
\DeclareMathOperator{\conv}{Conv}
\def\Conv{{\rm Conv}}
\def\cG{{\mathcal{G}}}
\DeclareMathOperator{\Vis}{Vis}
\begin{document}

\title[$h^*$-vectors of graph polytopes using activities]{$h^*$-vectors of graph polytopes using activities of dissecting spanning trees}

\author{Tam\'as K\'alm\'an}
\address{Department of Mathematics, 
Tokyo Institute of Technology\\
H-214, 2-12-1 Ookayama, Meguro-ku, Tokyo 152-8551, Japan}
\email{kalman@math.titech.ac.jp}

\author{Lilla T\'othm\'er\'esz}
\address{ELKH-ELTE Egerv\'ary Research Group and ELTE Eötvös Loránd University, P\'azm\'any P\'eter s\'et\'any 1/C, Budapest, Hungary}
\email{lilla.tothmeresz@ttk.elte.hu}

\begin{abstract}
    Symmetric edge polytopes of graphs and root polytopes of semi-balanced digraphs are two classes of lattice polytopes whose
    $h^*$-polynomials have interesting properties and generalize important graph polynomials.
	For both classes of polytopes there are large, natural classes of dissections into unimodular simplices. These are such that the simplices correspond to certain spanning trees.
	
	We show that for any ``spanning tree dissection'' of the symmetric edge polytope of a graph, or the root polytope of a semi-balanced digraph, the $h^*$-polynomial
	of the polytope 
	can be computed as a generating function of certain activities of the corresponding spanning trees.
    Apart from giving simple and flexible algorithms
	for computing these polynomials, our results also reveal that all dissections in question are surprisingly similar to each other: It turns out that 
	the distributions of
	many statistics of spanning tree dissections are in fact independent of the actual dissection.
\end{abstract}


\maketitle

\section{Introduction}
\label{s:intro}

\subsection{Aim of the paper}
Let $\cG$ be a simple undirected graph with vertex set $V(\cG)$ and edge set $E(\cG)$.
The polyhedron $$P_\cG =\conv \{\mathbf{1}_u-\mathbf{1}_v, \mathbf{1}_v-\mathbf{1}_u \mid uv\in E(\cG)\} \subset \mathbb{R}^{V(\mathcal{G})}$$ is called the \emph{symmetric edge polytope} of $\cG$ \cite{Sym_edge_appearance}.
(Here $\mathbf{1}_u, \mathbf{1}_v$ stand for generators of $\mathbb R^{V(\cG)}$ that correspond to $u,v\in V(\cG)$.)
Symmetric edge polytopes have recently garnered considerable interest \cite{arithm_symedgepoly,DDM19,Sym_edge_appearance,OT}. Most of the literature concentrates on the computation and properties of their $h^*$-vectors (a.k.a.\ $h^*$-polynomials). 
In particular, the $h^*$-vector of a symmetric edge polytope is conjectured to be $\gamma$-positive \cite{OT}.

Directed graphs (digraphs) have
an associated graph polytope with a definition similar to that of the symmetric edge polytope \cite{alex}. More concretely, the \emph{root polytope} of a directed graph $G=(V,E)$ is the convex hull
\[\mathcal{Q}_G=\conv\{\,\mathbf{1}_h - \mathbf{1}_t \mid \overrightarrow{th}\in E\,\}.\] 
In fact, symmetric edge polytopes are root polytopes of bidirected graphs, where a bidirected graph is the directed graph obtained from an undirected graph by replacing each edge $uv$ with two directed edges $\overrightarrow{uv}$ and $\overrightarrow{vu}$.

We will be interested in root polytopes of so-called semi-balanced digraphs, which are directed graphs so that each cycle has the same number of edges oriented in the two cyclic directions. 
There are two reasons to consider this class of digraphs: It turns out that connected semi-balanced digraphs are exactly the connected digraphs where the dimension of $\mathcal{Q}_G$ is $|V(G)|-2$ (otherwise the dimension is $|V(G)|-1$) \cite{semibalanced}, that is, $\mathcal{Q}_G$ has rather special properties for semi-balanced digraphs.
Moreover, 
all facets of symmetric edge polytopes are root polytopes of certain semi-balanced digraphs \cite{arithm_symedgepoly}, i.e., these polytopes are relevant also for the investigation of symmetric edge polytopes. The $h^*$-polynomial of $\mathcal{Q}_G$ is called the \emph{interior polynomial} of $G$ in \cite{semibalanced}. This polynomial generalizes the interior polynomial of a hypergraph, as well as the greedoid polynomial of a planar branching greedoid. Here the interior polynomial of a hypergraph is itself a generalization of $T(x,1)$ (associated to a graph), where $T(x,y)$ is the Tutte polynomial.

In this paper, 
we give graph-theoretic formulas for the 
$h^*$-polynomials of $P_\mathcal G$ and $\mathcal Q_G$ (for arbitrary $\cG$ and semi-balanced $G$), namely we express them as generating functions of certain activity statistics on certain spanning trees. 
We also pose some open questions.

The paper is structured as follows.  In Section \ref{ss:setup_results}, we informally explain the setup and our results. 
In Section \ref{sec:prelim} we introduce the necessary tools. We prove our theorems in Section \ref{sec:proofs}. Finally, in Section \ref{s:open_problems}, we mention some open problems and further explain the motivation for our work. 

Acknowledgements:
TK was supported by a Japan Society for the Promotion of Science (JSPS) Grant-in-Aid for Scientific Research C (no.\ 17K05244).
LT was supported by the National Research, Development and Innovation Office of Hungary -- NKFIH, grant no.\ 132488, by the János Bolyai Research Scholarship of the Hungarian Academy of Sciences, and by the ÚNKP-21-5 New National Excellence Program of the Ministry for Innovation and Technology, Hungary. LT was also partially supported by the Counting in Sparse Graphs Lendület Research Group of the Alfr\'ed Rényi Institute of Mathematics.

\subsection{Setup and results}\label{ss:setup_results}
For both the symmetric edge polytope of a graph and the root polytope of a semi-balanced digraph, $h^*$-polynomials are typically computed by first dissecting the polytope, using only the vertices of the polytope (and, in the former case, the origin) as $0$-simplices. Here a \emph{dissection into simplices} means a set of maximal dimensional
simplices with disjoint interiors, so that their union is the whole polytope. A \emph{triangulation} is a dissection into simplices where we additionally require that the intersection of any two simplices be a common face. Triangulations are more often used than dissections, but everything we say in this paper applies also to the broader case of dissections.

The vertices of $\mathcal{Q}_G$ correspond to the (directed) edges of $G$.
For a 
semi-balanced digraph $G$, 
a set of vertices is affine independent if and only if the corresponding subgraph is a forest \cite{semibalanced}. (Thus if $G$ is connected then  $\dim\mathcal{Q}_G=|V(G)|-2$. Without semi-balancedness, the `only if' direction fails.)
Therefore, if we want to 
dissect 
$\mathcal{Q}_G$ using only its vertices, maximal simplices will correspond to spanning trees.
In other words, dissecting the root polytope $\mathcal{Q}_G$ amounts to finding a subset $\mathcal{T}$ of the spanning trees of $G$ that satisfies the requirement that the corresponding simplices $\{\mathcal{Q}_T \mid T\in \mathcal{T}\}$ are interior disjoint and cover the polytope. We will call such a set $\mathcal{T}$ of spanning trees a \emph{dissecting tree set for $G$}. (See Figure \ref{fig:semi-balanced_ex} for an example.)

It is easy to show that for each spanning tree $T$, the simplex $\mathcal{Q}_T$ is unimodular
\cite{semibalanced}.
This turns out to be crucial for the computation that we intend to carry out. In fact, the abundance of unimodular simplices within $\mathcal{Q}_G$ and $P_\cG$, and their scarcity in general, explains why we only consider these particular polytopes. We have of course well-established methods for finding (say, regular) 
triangulations of polytopes.
There are also natural ways of producing 
dissections (which in general fail to be triangulations) for $\mathcal{Q}_G$ and $P_\cG$.
One such construction is based on ribbon structures \cite{semibalanced,sym_ribbon}. Arguably, its most important special case is the following.

\begin{ex}
\label{ex:coeulerian} 
Let $G$ be a connected planar semi-balanced digraph, with planar dual $G^*$, and let $r$ be an arbitrary vertex of $G^*$. Notice that in this case $G^*$ 
naturally becomes
an Eulerian digraph. 
It is well known in general that the spanning trees of $G$ are exactly the complements of the spanning trees of $G^*$. Now the complements of the spanning arborescences of $G^*$, rooted at $r$, form a dissecting (in fact, triangulating) tree set for $G$ \cite{semibalanced}.
\end{ex}

To the symmetric edge polytope $P_\mathcal{G}$ of an undirected graph $\mathcal{G}$, each edge of $\cG$ contributes two vertices: they correspond to the two orientations of the edge.
If $\cG$ is connected then
$P_\cG$ has dimension $|V(\cG)|-1$. As alluded to earlier, the facets of $P_{\mathcal{G}}$ are root polytopes of spanning 
subgraphs of $\mathcal{G}$ that are oriented in a semi-balanced way. As the origin is in the relative interior of $P_\mathcal{G}$, one can dissect
$P_\mathcal{G}$ by first dissecting the 
facets of the boundary, then coning
at the origin. For any directed graph $G$, let $\tilde{\mathcal{Q}}_G=\conv(\{\mathbf{0}\} \cup \{\,\mathbf{1}_h - \mathbf{1}_t\mid \overrightarrow{th}\in E(G)\,\})$. 
Then our strategy for dissecting $P_\mathcal{G}$ calls for appropriately
taking simplices of the type $\tilde{\mathcal{Q}}_T$ for some oriented spanning trees of $\cG$.
Here again, any such dissection is automatically unimodular.
For an undirected graph $\mathcal{G}$, we will call a set of oriented spanning trees $\mathcal{T}$ a \emph{dissecting tree set for $\cG$} if the corresponding simplices $\mathcal{Q}_T$ dissect the boundary of $P_{\cG}$.
Dissections, and even triangulations, of this type are not hard to construct and this has been done in various concrete ways \cite{arithm_symedgepoly,sym_ribbon}.

We need one more definition in order to state our first result.
Let $T$ be a directed tree, and let $v$ be a fixed vertex.
If $e\in T$ is an edge, then $T-e$ has two components. We say that $e\in T$ is \emph{pointing away from $v$ in $T$} if $v$ is in the component of $T-e$ that contains the tail of $e$.

\begin{thm}\label{thm:sym_edge_h-vector}
 Let $\mathcal G$ be a connected graph and
 let us denote the $h^*$-vector of the symmetric edge polytope
 $P_\mathcal{G}$ by $h^*_\cG$. Let $\mathcal{T}$ be any dissecting tree set for 
	$\cG$, and let $v$ be any vertex of $\cG$.
	Then
	$$(h^*_\cG)_i = |\{T\in \mathcal{T}\mid T \text{ has exactly $i$ edges pointing away from $v$}\}|.$$
\end{thm}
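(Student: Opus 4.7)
The plan is to prove this using the half-open decomposition technique, with the reference point chosen to detect the direction of tree edges relative to $v$. Recall the standard fact: if $P$ is a lattice $d$-polytope with a unimodular dissection into $d$-simplices $\Delta_i$, and $q$ is a generic point in the interior of $P$ (avoiding every hyperplane supporting a facet of any $\Delta_i$), then the half-open simplices $\tilde{\Delta}_i := \Delta_i \setminus \bigcup(\text{open facets of } \Delta_i)$ disjointly cover $P$, where a facet $F$ of $\Delta_i$ is \emph{open} if $q$ lies strictly on the opposite side of $\operatorname{aff}(F)$ from $\Delta_i$. Since a unimodular $d$-simplex with $k$ facets removed has Ehrhart series $t^k/(1-t)^{d+1}$, summing yields
\[ h^*_\cG(t) = \sum_{T\in\mathcal{T}} t^{k_T}, \]
where $k_T$ is the number of open facets of $\tilde{\mathcal{Q}}_T$. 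The goal is then to arrange that $k_T$ counts edges of $T$ pointing away from $v$.

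Write $n=|V(\cG)|$ and take $q = \epsilon\bigl(n\,\mathbf{1}_v - \sum_{u\in V(\cG)}\mathbf{1}_u\bigr)$ with $\epsilon>0$ small; then $\sum_u q_u = 0$, so $q\in\operatorname{aff}(P_\cG)$, and $q$ lies in the interior of $P_\cG$ (since $\mathbf{0}$ does). Each simplex $\tilde{\mathcal{Q}}_T$ has $n$ vertices and $n$ facets: one facet $\mathcal{Q}_T$ opposite $\mathbf{0}$, and, for each edge $e=\overrightarrow{th}\in T$, a facet $\tilde{\mathcal{Q}}_{T-e}$ opposite the vertex $\mathbf{1}_h-\mathbf{1}_t$. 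For the latter, let $C_t,C_h$ be the two components of $T-e$ containing $t$ and $h$; the linear functional $\ell_e(x)=\sum_{u\in C_h}x_u$ vanishes on $\mathbf{0}$ and on every $\mathbf{1}_{h'}-\mathbf{1}_{t'}$ with $\overrightarrow{t'h'}\in T\setminus\{e\}$ (since $t'$ and $h'$ lie in the same component of $T-e$), while $\ell_e(\mathbf{1}_h-\mathbf{1}_t)=1$. Thus $\tilde{\mathcal{Q}}_T\subset\{\ell_e\geq 0\}$, and $\tilde{\mathcal{Q}}_{T-e}$ is open iff $\ell_e(q)<0$. A direct calculation gives $\ell_e(q)=\epsilon(n-|C_h|)>0$ when $v\in C_h$ and $\ell_e(q)=-\epsilon|C_h|<0$ when $v\notin C_h$; so the facet is open exactly when $v\notin C_h$, which by definition means $e$ points away from $v$.

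For the remaining ``base'' facet $\mathcal{Q}_T$, the supporting functional $\phi_T$ satisfying $\phi_T\equiv 1$ on $\mathcal{Q}_T$ and $\phi_T(\mathbf{0})=0$ yields $\tilde{\mathcal{Q}}_T\subset\{\phi_T\leq 1\}$; since $\phi_T(q)$ is linear in $\epsilon$, taking $\epsilon$ small enough (simultaneously for the finitely many $T\in\mathcal{T}$) ensures $\phi_T(q)<1$, so this facet is never open. Therefore $k_T$ equals the number of edges of $T$ pointing away from $v$, and the half-open formula above gives the theorem. The main obstacle is pinning down the correct reference point: one needs a direction that makes the sign of $\ell_e(q)$ encode whether $v$ lies in $C_h$ or $C_t$, and the asymmetric weighting $(n-1,-1,\dots,-1)$ does exactly this via a component-sum computation. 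Verifying genericity for small $\epsilon>0$---i.e., $\ell_e(q)\neq 0$ and $\phi_T(q)\neq 1$ for all relevant facets---is an immediate consequence of the sign analysis above.
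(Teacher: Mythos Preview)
Your proof is correct and follows essentially the same approach as the paper: both invoke the half-open decomposition principle (Proposition~\ref{prop:h^*-vector_from_visible_facets}), choose a reference point $\mathbf q$ with $q_v>0$, $q_u<0$ for $u\neq v$, and $\sum_u q_u=0$, and use the cut functional $\ell_e$ (the paper's $h$) to determine which facets $\tilde{\mathcal Q}_{T-e}$ are visible. The only minor difference is in the treatment of the base facet $\mathcal Q_T$: you argue it is never open by taking $\epsilon$ small, whereas the paper observes directly that $\mathcal Q_T$ lies on $\partial P_{\mathcal G}$ and hence cannot be visible from any interior point.
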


Let us stress again that even though $\cG$ is unoriented, its dissecting trees $T$ are oriented.
We prove Theorem \ref{thm:sym_edge_h-vector} in section  \ref{sec:proofs}. We note that the (short) proof follows closely the proof of \cite[Proposition 4.6]{arithm_symedgepoly} where this formula is established for the $h$-vector of a certain triangulation of the symmetric edge polytope of a complete bipartite graph.

As a corollary we obtain an interesting identity, namely that the distribution of the number of edges pointing away from an arbitrary given node is independent of which node and which dissecting tree set we consider.

\begin{coroll}
	Let $\cG$ be an undirected graph, and let 
	$0\le i\le|V(\mathcal{G})|-1$
	be arbitrary.
	For any dissecting tree set $\mathcal{T}$ for $\cG$ and any fixed vertex $v\in V(\mathcal{G})$, the value $$|\{T\in \mathcal{T}\mid T \text{ has exactly $i$ edges pointing away from $v$}\}|$$ is the same.
\end{coroll}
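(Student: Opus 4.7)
The plan is to deduce the corollary directly from Theorem \ref{thm:sym_edge_h-vector}. The key observation is that the left-hand side of the identity in the theorem, namely $(h^*_\cG)_i$, is an intrinsic invariant of the polytope $P_\cG$: it depends only on the Ehrhart series of $P_\cG$, and hence only on the graph $\cG$ itself, with no reference to any choice of dissection or base vertex.

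First I would fix an arbitrary undirected graph $\cG$, an arbitrary vertex $v\in V(\cG)$, and an arbitrary dissecting tree set $\mathcal{T}$ for $\cG$. Applying Theorem \ref{thm:sym_edge_h-vector} to this choice of $\mathcal{T}$ and $v$ gives
\[
|\{T\in \mathcal{T}\mid T \text{ has exactly $i$ edges pointing away from $v$}\}| = (h^*_\cG)_i.
\]
Since the right-hand side is completely determined by $\cG$ (and the index $i$), and in particular does not involve $\mathcal{T}$ or $v$, the left-hand side is likewise independent of these choices. Applying the theorem a second time to a different dissecting tree set $\mathcal{T}'$ and/or a different vertex $v'$ yields the same value $(h^*_\cG)_i$, proving the desired equality of cardinalities.

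There is essentially no obstacle here beyond invoking Theorem \ref{thm:sym_edge_h-vector}; the whole content of the corollary lies in the theorem itself. The only small subtlety worth mentioning is that dissecting tree sets must be guaranteed to exist in order for the statement to be non-vacuous, and that the range $0\le i\le |V(\cG)|-1$ must be consistent with the possible number of edges in a spanning tree of $\cG$ (which has $|V(\cG)|-1$ edges), but both of these points are already addressed in the preceding discussion about the dimension of $P_\cG$ and the construction of dissecting tree sets.
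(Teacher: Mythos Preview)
Your proposal is correct and matches the paper's treatment: the corollary is stated immediately after Theorem~\ref{thm:sym_edge_h-vector} without a separate proof, precisely because it follows at once from the fact that $(h^*_\cG)_i$ depends only on $\cG$.
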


\begin{ex}\label{ex:K_3_computation}
The upper left panel of Figure \ref{fig:triangle_ex} shows the undirected triangle $K_3$, drawn as a bidirected graph. $P_{K_3}$ has dimension $2$, and in fact it is easy to check that it is a regular hexagon. The six facets of $P_{K_3}$ correspond to the six subgraphs drawn by thick edges in panels $2$ through $7$. All of these facets are simplices, hence the six spanning trees in the last six panels form a dissecting tree set for $K_3$ (which happens to be unique in this case).

One can easily check that whichever vertex we choose as base vertex, there is one tree with $0$ edges pointing away from the base vertex, four trees with $1$ edge pointing away from the base vertex, and one tree with $2$ edges pointing away from the base vertex. Hence by Theorem \ref{thm:sym_edge_h-vector}, the $h^*$-polynomial of the symmetric edge polytope is $x^2+4x+1$.
\end{ex}

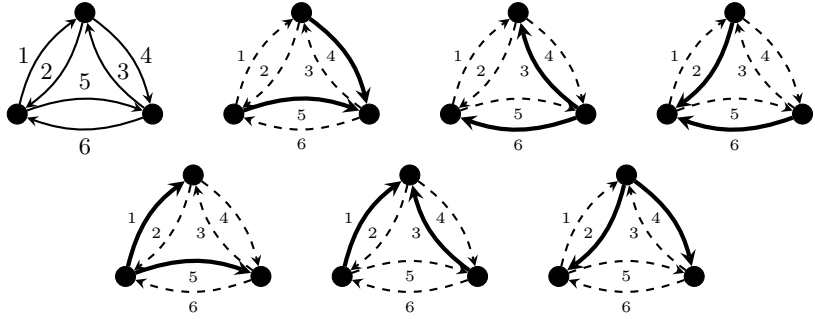
\begin{figure}
    \begin{center}
    \begin{tikzpicture}[scale=0.9]
    \tikzstyle{o}=[circle,fill,scale=.8,draw]
	\begin{scope}[shift={(-4.8,0)}]
	\node [o] (1) at (0,0) {};
	\node [o] (2) at (1,1.5) {};
	\node [o] (3) at (2,0) {};
	
	\path [thick,->,>=stealth,] (1) edge [left,bend left=20] node {\small 1} (2);
	\path [thick,->,>=stealth,] (2) edge [left,bend left=20] node {\small 2} (1);
	\path [thick,->,>=stealth,] (1) edge [above,bend left=20] node {\small 5} (3);
	\path [thick,->,>=stealth,] (3) edge [below,bend left=20] node {\small 6} (1);
	\path [thick,->,>=stealth,] (2) edge [right,bend left=20] node {\small 4} (3);
	\path [thick,->,>=stealth,] (3) edge [right,bend left=20] node {\small 3} (2);
	\end{scope}
	
	\begin{scope}[shift={(-1.6,0)}]
	\node [o] (1) at (0,0) {};
	\node [o] (2) at (1,1.5) {};
	\node [o] (3) at (2,0) {};
	
	\path [thick,->,>=stealth,dashed] (1) edge [left,bend left=20] node {\tiny 1} (2);
	\path [thick,->,>=stealth,dashed] (2) edge [left,bend left=20] node {\tiny 2} (1);
	\path [ultra thick,->,>=stealth,] (1) edge [below,bend left=20] node {\tiny 5} (3);
	\path [thick,->,>=stealth,dashed] (3) edge [below,bend left=20] node {\tiny 6} (1);
	\path [ultra thick,->,>=stealth,] (2) edge [left,bend left=20] node {\tiny 4} (3);
	\path [thick,->,>=stealth,dashed] (3) edge [left,bend left=20] node {\tiny 3} (2);
	\end{scope}
	\begin{scope}[shift={(1.6,0)}]
	\node [o] (1) at (0,0) {};
	\node [o] (2) at (1,1.5) {};
	\node [o] (3) at (2,0) {};
	
	\path [thick,->,>=stealth,dashed] (1) edge [left,bend left=20] node {\tiny 1} (2);
	\path [thick,->,>=stealth,dashed] (2) edge [left,bend left=20] node {\tiny 2} (1);
	\path [thick,->,>=stealth,dashed] (1) edge [below,bend left=20] node {\tiny 5} (3);
	\path [ultra thick,->,>=stealth,] (3) edge [below,bend left=20] node {\tiny 6} (1);
	\path [thick,->,>=stealth,dashed] (2) edge [left,bend left=20] node {\tiny 4} (3);
	\path [ultra thick,->,>=stealth,] (3) edge [left,bend left=20] node {\tiny 3} (2);
	\end{scope}
	\begin{scope}[shift={(4.8,0)}]
	\node [o] (1) at (0,0) {};
	\node [o] (2) at (1,1.5) {};
	\node [o] (3) at (2,0) {};
	
	\path [thick,->,>=stealth,dashed] (1) edge [left,bend left=20] node {\tiny 1} (2);
	\path [ultra thick,->,>=stealth,] (2) edge [left,bend left=20] node {\tiny 2} (1);
	\path [thick,->,>=stealth,dashed] (1) edge [below,bend left=20] node {\tiny 5} (3);
	\path [ultra thick,->,>=stealth,] (3) edge [below,bend left=20] node {\tiny 6} (1);
	\path [thick,->,>=stealth,dashed] (2) edge [left,bend left=20] node {\tiny 4} (3);
	\path [thick,->,>=stealth,dashed] (3) edge [left,bend left=20] node {\tiny 3} (2);
	\end{scope}\begin{scope}[shift={(-3.2,-2.4)}]
	\node [o] (1) at (0,0) {};
	\node [o] (2) at (1,1.5) {};
	\node [o] (3) at (2,0) {};
	
	\path [ultra thick,->,>=stealth,] (1) edge [left,bend left=20] node {\tiny 1} (2);
	\path [thick,->,>=stealth,dashed] (2) edge [left,bend left=20] node {\tiny 2} (1);
	\path [ultra thick,->,>=stealth,] (1) edge [below,bend left=20] node {\tiny 5} (3);
	\path [thick,->,>=stealth,dashed] (3) edge [below,bend left=20] node {\tiny 6} (1);
	\path [thick,->,>=stealth,dashed] (2) edge [left,bend left=20] node {\tiny 4} (3);
	\path [thick,->,>=stealth,dashed] (3) edge [left,bend left=20] node {\tiny 3} (2);
	\end{scope}
	\begin{scope}[shift={(0,-2.4)}]
	\node [o] (1) at (0,0) {};
	\node [o] (2) at (1,1.5) {};
	\node [o] (3) at (2,0) {};
	
	\path [ultra thick,->,>=stealth,] (1) edge [left,bend left=20] node {\tiny 1} (2);
	\path [thick,->,>=stealth,dashed] (2) edge [left,bend left=20] node {\tiny 2} (1);
	\path [thick,->,>=stealth,dashed] (1) edge [below,bend left=20] node {\tiny 5} (3);
	\path [thick,->,>=stealth,dashed] (3) edge [below,bend left=20] node {\tiny 6} (1);
	\path [thick,->,>=stealth,dashed] (2) edge [left,bend left=20] node {\tiny 4} (3);
	\path [ultra thick,->,>=stealth,] (3) edge [left,bend left=20] node {\tiny 3} (2);
	\end{scope}
	\begin{scope}[shift={(3.2,-2.4)}]
	\node [o] (1) at (0,0) {};
	\node [o] (2) at (1,1.5) {};
	\node [o] (3) at (2,0) {};
	
	\path [thick,->,>=stealth,dashed] (1) edge [left,bend left=20] node {\tiny 1} (2);
	\path [ultra thick,->,>=stealth,] (2) edge [left,bend left=20] node {\tiny 2} (1);
	\path [thick,->,>=stealth,dashed] (1) edge [below,bend left=20] node {\tiny 5} (3);
	\path [thick,->,>=stealth,dashed] (3) edge [below,bend left=20] node {\tiny 6} (1);
	\path [ultra thick,->,>=stealth,] (2) edge [left,bend left=20] node {\tiny 4} (3);
	\path [thick,->,>=stealth,dashed] (3) edge [left,bend left=20] node {\tiny 3} (2);
	\end{scope}
	\end{tikzpicture}
	\end{center}
    \caption{The complete graph $K_3$, drawn as a bidirected graph, and a dissecting tree set.}
    \label{fig:triangle_ex}
\end{figure}

For our other results we need some further preparation on directed graphs.
Let $G$ be an arbitrary digraph, and let $T$ be a 
spanning tree of $G$.
For an edge $e\in T$, the \emph{fundamental cut} of $e$ with respect to $T$, denoted by $C^*(T,e)$, is the set of edges of $G$ connecting the two components of $T-e$ (which are called the \emph{shores} of the cut).
We say that an edge $e'\in C^*(T,e)$
\emph{stands parallel} to $e$ if the heads of $e$ and $e'$ are in the same shore. 
Otherwise we say that $e'$ \emph{stands opposite} to $e$. 

\begin{ex}
In the second panel of Figure \ref{fig:triangle_ex}, the edges labeled $4$ and $5$ form a spanning tree (thick edges). The fundamental cut of the edge 4 is $\{1,2,3,4\}$, with 2 and 4 standing parallel to 4, and 1 and 3 standing opposite to it.
\end{ex}

We will also need the following notion.

\begin{defn}[internal semi-activity in digraphs \cite{hyperBernardi}]
\label{def:semiactive}
	Let $G$ be a digraph with a fixed ordering of the edges. Let $T$ be a spanning tree of $G$. An 
	edge $e\in T$ is \emph{internally semi-active} for $T$ if in the fundamental cut $C^*(T,e)$, the maximal edge (with respect to the fixed ordering) stands parallel to $e$. If the maximal edge stands opposite to $e$, then we say that $e$ is \emph{internally semi-passive} for $T$. 
	
	The \emph{internal semi-activity} of a spanning tree (with respect to the fixed order) is the number of its internally semi-active edges, while the \emph{internal semi-passivity} is the number of internally semi-passive edges.
\end{defn}

This notion of activity is the dual pair of 
``external semi-activity'' 
that was privately communicated to us by Alex Postnikov. Internal semi-activity is similar to Tutte's concept of internal activity \cite{tutte}, but instead of requiring $e\in T$ to be the maximal element in $C^*(T,e)$, it only requires $e$ to stand parallel to the maximal edge of $C^*(T,e)$.

\begin{ex}
The second panel of Figure \ref{fig:triangle_ex} shows the spanning tree $\{4,5\}$. We use the ordering of the edges given by the labeling. The edge $4$ is the maximal element in its fundamental cut, and each edge is parallel to itself, thus $4$ is internally semi-active. On the other hand, the maximal element of the fundamental cut of $5$ is $6$, which stands opposite to $5$, whence $5$ is internally semi-passive.
\end{ex}

These concepts lead to our second expression for the $h^*$-polynomial of $P_\cG$. In order for them to apply, 
we need to think of $\cG$ as a bidirected graph. That is, for any $uv\in E(\cG)$, we take two directed edges $\overrightarrow{uv}$ and $\overrightarrow{vu}$. 

\begin{thm}\label{thm:sym_edge_h-vector_fixed_order}
	Let $\mathcal{T}$ be any dissecting tree set for 
	the undirected graph
	$\cG$. Consider $\cG$ as a bidirected graph, and fix an ordering of the (directed) edges of $\cG$.
	Then 
	the $h^*$-vector of the symmetric edge polytope
 $P_\mathcal{G}$ satisfies
	$$(h^*_\cG)_i = |\{T\in \mathcal{T}\mid T \text{ has exactly $i$ internally semi-passive edges}\}|.$$
\end{thm}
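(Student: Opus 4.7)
My plan is to establish Theorem \ref{thm:sym_edge_h-vector_fixed_order} via a half-open decomposition of $P_\cG$ adapted to the semi-passivity statistic, in the same spirit as the proof of Theorem \ref{thm:sym_edge_h-vector}. For each $T\in\mathcal{T}$, let $\Delta_T$ be the half-open simplex obtained from $\tilde{\mathcal{Q}}_T$ by removing the side facet $\tilde{\mathcal{Q}}_{T-e}$ for every internally semi-passive edge $e\in T$, while keeping all other facets closed (in particular the bottom facet $\mathcal{Q}_T\subset\partial P_\cG$, which is opposite the origin and does not correspond to any tree edge). Since the $h^*$-polynomial of a half-open unimodular $d$-simplex with exactly $k$ open facets equals $x^k$, the theorem will follow immediately once I show
\[P_\cG \;=\; \bigsqcup_{T\in\mathcal{T}}\Delta_T\]
as a disjoint union of sets.

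Because the closed simplices $\tilde{\mathcal{Q}}_T$ already tile $P_\cG$ by assumption on $\mathcal{T}$, the disjoint-union claim reduces to a local pairing at each interior facet $F$ shared by two simplices $\tilde{\mathcal{Q}}_T$ and $\tilde{\mathcal{Q}}_{T'}$: I need exactly one of $\Delta_T,\Delta_{T'}$ to contain $F$. (The bottom facets $\mathcal{Q}_T$ lie on $\partial P_\cG$ and belong to a single simplex each, so they are handled automatically.) Writing $T'=T-e+e'$, the shared facet corresponds to $e\in T$ and to $e'\in T'$. A quick computation with the linear functional given by the characteristic vector $\chi_A$ of one shore of $C=C^*(T,e)=C^*(T',e')$ shows that $\chi_A\cdot(\mathbf{1}_h-\mathbf{1}_t)$ equals $+1$ or $-1$ according to which shore contains the head of $e=\overrightarrow{th}$; hence $\tilde{\mathcal{Q}}_T$ and $\tilde{\mathcal{Q}}_{T'}$ can only sit on opposite sides of the hyperplane spanned by $F$ when $e$ and $e'$ stand opposite in the cut $C$. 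Granted this, let $m$ be the maximum edge of $C$. Since $e$ and $e'$ are opposite in $C$, $m$ is parallel to exactly one of them, so $m$ stands opposite to $e$ in $T$ if and only if $m$ stands parallel to $e'$ in $T'$. Translating into the defining condition of semi-passivity: $e$ is internally semi-passive in $T$ if and only if $e'$ is internally semi-active in $T'$. Consequently $F$ is opened in exactly one of $\Delta_T,\Delta_{T'}$.

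The main technical obstacle I anticipate is upgrading this pairwise statement into the full set-theoretic disjoint union, with careful attention to points lying on lower-dimensional faces where several simplices meet. This is the standard half-open bookkeeping and should go through once the opening rule is consistent on every interior facet; but since $\mathcal{T}$ is in general only a dissection and not a triangulation (as foreshadowed by Section \ref{s:neg}), some care is needed to confirm that the argument does not secretly rely on the intersection of any two simplices being a common face. Once this is verified, summing $h^*(\Delta_T)=x^{s(T)}$, where $s(T)$ counts the internally semi-passive edges of $T$, yields the claimed formula for $h^*_\cG$.
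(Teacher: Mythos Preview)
Your high-level picture --- open the facets corresponding to semi-passive edges and hope for a half-open partition of $P_\cG$ --- is exactly the right one, but the justification you give has a genuine gap, and the paper closes it with an idea you are missing.

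The problem is your reduction to ``local pairing at each interior facet $F$ shared by two simplices.'' For a dissection that is not a triangulation, two neighbouring simplices need not meet in a common face at all: a facet of $\tilde{\mathcal Q}_T$ can abut several simplices along pieces of various dimensions, so there is no well-defined $T'=T-e+e'$ to pair $e$ with. Even in an honest triangulation, having a consistent rule at codimension one does \emph{not} by itself force a disjoint union at lower-dimensional faces; an arbitrary ``open one side of each interior wall'' rule can over- or under-count points on ridges. What makes half-open bookkeeping go through in general is that the opening rule be induced by visibility from a single generic point (or, equivalently, by a generic linear functional). You have not shown that your semi-passivity rule is of this type, and you flag this yourself without resolving it.

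The paper supplies precisely this missing ingredient. It exhibits the explicit interior point
\[
\mathbf q \;=\; \sum_{i=1}^m \frac{t^{i}}{\sum_{j=1}^m t^{j}}\,\mathbf x_{e_i},\qquad t\text{ large (e.g.\ }t=2),
\]
and checks, using the cut functional $h$ attached to $C^*(T,e_k)$, that $h(\mathbf q)<0$ if and only if the largest edge of the cut stands opposite to $e_k$ --- i.e., visibility of $\tilde{\mathcal Q}_{T-e_k}$ from $\mathbf q$ coincides exactly with internal semi-passivity of $e_k$. Once this is known, Lemma~\ref{lem:visibility_disjoint} (proved for arbitrary dissections, not just triangulations) yields the disjoint half-open cover without any face-by-face analysis, and Proposition~\ref{prop:h^*-vector_from_visible_facets} converts it into the $h^*$-formula. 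In short, the paper proves that your opening rule \emph{is} a visibility rule; that is the step your argument needs and does not have.
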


\begin{ex}
Regarding Figure \ref{fig:triangle_ex}, using the labeling of the edges as the ordering, the internal semi-passivities of the spanning trees of panels 2 through 7 are, respectively, $1$, $1$, $0$, $2$, $1$, and $1$. Hence we again (cf.\ Example \ref{ex:K_3_computation}) conclude that the $h^*$-polynomial of the symmetric edge polytope of $K_3$ is $x^2+4x+1$. 
\end{ex}

We now turn our attention to root polytopes of semi-balanced digraphs, where we obtain literally the same formula. 

\begin{thm}\label{thm:interior_poly_fixed_edge_order}
	Let $G$ be a semi-balanced digraph and let us denote the $h^*$-vector of its root polytope $\mathcal{Q}_G$ by $h^*_{\mathcal{Q}_G}$. Let $\mathcal{T}$ be any set of dissecting spanning trees for $G$, and fix any ordering of the edges of $G$.
	Then
	$$(h^*_{\mathcal{Q}_G})_i = |\{T\in \mathcal{T}\mid T \text{ has exactly $i$ internally semi-passive edges}\}|.$$
\end{thm}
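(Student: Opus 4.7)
The plan is to prove Theorem \ref{thm:interior_poly_fixed_edge_order} by exhibiting a half-open decomposition of $\mathcal{Q}_G$ compatible with the dissection $\mathcal{T}$, parallel to the strategy that yields Theorems \ref{thm:sym_edge_h-vector} and \ref{thm:sym_edge_h-vector_fixed_order} (and to the proof of \cite[Proposition 4.6]{arithm_symedgepoly}). The governing principle is the standard fact that if a lattice polytope of dimension $d$ is partitioned into half-open unimodular simplices $\sigma^{\circ}$, where $\sigma^{\circ}$ is a unimodular simplex $\sigma$ with $k(\sigma)$ of its facets removed, then
\[
h^*_{\mathcal{Q}_G}(t)=\sum_{\sigma}t^{k(\sigma)}.
\]
So the task is to designate, for each $T\in\mathcal{T}$, a set of facets of $\mathcal{Q}_T$ to remove in such a way that the resulting half-open simplices disjointly tile $\mathcal{Q}_G$ and the number of removed facets of $\mathcal{Q}_T$ equals the internal semi-passivity of $T$. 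The rule I propose is the natural one: remove the facet $\mathcal{Q}_{T-e}$ from $\mathcal{Q}_T$ precisely when $e$ is internally semi-passive for $T$ under the fixed edge order. Matching with semi-passivity is then built in, and only validity of the partition remains.

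The verification splits into two cases. For an interior facet, each $\mathcal{Q}_{T-e}$ lying in the interior of $\mathcal{Q}_G$ is shared (in full) with a unique other simplex $\mathcal{Q}_{T'}$ of the dissection, and this neighbor necessarily has the form $T'=T-e+e'$ with $e'\in C^*(T,e)$ standing opposite to $e$ (otherwise $\mathcal{Q}_{T'}$ would lie on the same side of the facet hyperplane as $\mathcal{Q}_T$). Since $C^*(T,e)=C^*(T',e')$ as edge sets, write $m$ for the maximal edge of this common cut. Because the heads of $e$ and $e'$ sit in opposite shores, $m$ stands parallel to $e$ in $T$ if and only if $m$ stands opposite to $e'$ in $T'$; equivalently, exactly one of $e,e'$ is internally semi-passive in its respective tree, so exactly one of the two adjacent simplices removes the shared facet. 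For a boundary facet $\mathcal{Q}_{T-e}$, all vertices of $\mathcal{Q}_G$ lie on one closed side of its affine hull; edges whose endpoints lie within a single shore contribute vertices inside that hull, while each edge of $C^*(T,e)$ lies strictly on one of the two sides according to whether it stands parallel or opposite to $e$. Being on the boundary forces all cut edges onto the same side, which must be the parallel side since $e$ itself is in the cut. In particular the maximal edge of $C^*(T,e)$ stands parallel to $e$, so $e$ is internally semi-active and the boundary facet is correctly not removed.

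The technical core, and the point I expect to be hardest, is the structural claim invoked in the interior-facet case: that in an arbitrary dissecting tree set for a semi-balanced digraph, every interior codimension-one face of any $\mathcal{Q}_T$ is shared in full with a unique other simplex $\mathcal{Q}_{T'}$ of the form $\mathcal{Q}_{T-e+e'}$, so that the dissection is automatically facet-to-facet at top codimension. This should follow from the unimodularity of the $\mathcal{Q}_T$ together with the semi-balanced hypothesis (which ensures that affine dependencies among vertex subsets correspond exactly to cycles), and would likely be isolated as a preparatory lemma. Once it is in place, summing $t^{k(\mathcal{Q}_T)}$ over $T\in\mathcal{T}$, where $k(\mathcal{Q}_T)$ is the internal semi-passivity of $T$, produces exactly the claimed formula for $h^*_{\mathcal{Q}_G}$.
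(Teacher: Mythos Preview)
Your half-open decomposition is the right one and coincides with the paper's: remove from $\mathcal{Q}_T$ exactly the facets $\mathcal{Q}_{T-e}$ for which $e$ is internally semi-passive. The difference lies in how you justify that these half-open pieces partition $\mathcal{Q}_G$, and this is where your argument has a genuine gap.

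Your verification hinges on the structural claim that an arbitrary dissecting tree set is automatically facet-to-facet in codimension one, so that every interior facet $\mathcal{Q}_{T-e}$ is shared in full with a unique neighbor $\mathcal{Q}_{T-e+e'}$. You flag this as the hard step and conjecture it follows from unimodularity plus semi-balancedness, but it does not: the paper works explicitly at the level of \emph{dissections} that need not be triangulations (the ribbon-structure dissections of \cite{semibalanced,sym_ribbon} are examples), and in such a dissection an interior facet of one simplex may be covered by portions of several others rather than by a single matching facet. Once that fails, your ``exactly one of $e,e'$ is semi-passive'' bookkeeping no longer applies, because there is no well-defined single $e'$ across the facet.

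The paper circumvents this entirely. Rather than arguing facet-by-facet against neighbors, it fixes a single interior point
\[
\mathbf{q}=\sum_{i=1}^{m}\frac{t^{i}}{\sum_{j=1}^{m}t^{j}}\,\mathbf{x}_{e_i}
\]
(with $t$ large, e.g.\ $t=2$) and invokes Lemma~\ref{lem:visibility_disjoint}, which shows that for \emph{any} dissection the half-open pieces $H_{\mathbf q}(\Delta_j)$ obtained by deleting the facets visible from $\mathbf q$ form a disjoint cover, with no face-to-face hypothesis needed. The computation then reduces to checking, for each $T$ and each $e_k\in T$, that the facet $\mathcal{Q}_{T-e_k}$ is visible from $\mathbf q$ if and only if $e_k$ is internally semi-passive. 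Using the cut functional $h$ associated to $C^*(T,e_k)$, one gets $h(\mathbf q)$ as a signed sum of powers $t^i$ over edges of the cut, whose sign is dictated by the largest index appearing---precisely the semi-passivity criterion. Proposition~\ref{prop:h^*-vector_from_visible_facets} then yields the $h^*$-vector. In short, the paper replaces your neighbor-matching argument (which requires a triangulation) by visibility from a cleverly weighted point (which works for any dissection); the rest of your reasoning about parallel versus opposite edges survives, but as the analysis of $h(\mathbf q)$ rather than of a putative adjacent tree.
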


\begin{ex}\label{ex:semibalanced_fixed}
Figure \ref{fig:semi-balanced_ex} shows a semi-balanced digraph in its left panel, and four spanning trees in panels 2 through 5 that form a dissecting tree set. To check that 
this is so,
note that these are exactly the Jaeger trees (see subsection \ref{ss:embedding_passivities} for the definition) 
of the digraph if we choose the ribbon structure induced by the positive orientation of the plane, moreover, the base vertex is the lower left vertex and the base edge is the one connecting the lower left and lower right vertices \cite[Theorem 5.8]{semibalanced}. 
In fact, our set of four trees is even triangulating \cite[Theorem 1.1]{KM}.

For the edge ordering indicated in the figure, the trees $T_1,T_2,T_3,T_4$ have internal semi-passivities $1,0,2,1$, respectively. Hence the $h^*$-polynomial of the root polytope is $x^2+2x+1$.
\end{ex}

\begin{figure}
	\begin{center}
	\begin{tikzpicture}[scale=.2]
	\begin{scope}[shift={(-20,0)}]
	\node [circle,fill,scale=.5,draw] (1) at (0,10) {};
	\node [circle,fill,scale=.5,draw] (2) at (5,10) {};
	\node [circle,fill,scale=.5,draw] (3) at (0,5) {};
	\node [circle,fill,scale=.5,draw] (4) at (5,5) {};
	\node [circle,fill,scale=.5,draw] (5) at (0,0) {};		
	\node [circle,fill,scale=.5,draw] (6) at (5,0) {};		
	\path [thick,<-,>=stealth] (1) edge [above] node {\small 1} (2);
	\path [thick,->,>=stealth] (3) edge [above] node {\small 2} (4);
	\path [thick,->,>=stealth] (5) edge [above] node {\small 3} (6);
	\path [thick,->,>=stealth] (3) edge [left] node {\small 4} (1);
	\path [thick,<-,>=stealth] (4) edge [right] node {\small 5} (2);
	\path [thick,->,>=stealth] (5) edge [left] node {\small 6} (3);
	\path [thick,->,>=stealth] (6) edge [right] node {\small 7} (4);
	\node [] (0) at (2.5,-4) {\small $G$};
	\end{scope}
	\begin{scope}[shift={(-10,0)}]
	\node [circle,fill,scale=.5,draw] (1) at (0,10) {};
	\node [circle,fill,scale=.5,draw] (2) at (5,10) {};
	\node [circle,fill,scale=.5,draw] (3) at (0,5) {};
	\node [circle,fill,scale=.5,draw] (4) at (5,5) {};
	\node [circle,fill,scale=.5,draw] (5) at (0,0) {};		
	\node [circle,fill,scale=.5,draw] (6) at (5,0) {};		
	\path [ultra thick,<-,>=stealth] (1) edge [left] node {} (2);
	\path [thick,dashed,->,>=stealth] (3) edge [above] node {} (4);
	\path [thick,dashed,->,>=stealth] (5) edge [right] node {} (6);
	\path [ultra thick,->,>=stealth] (3) edge [below] node {} (1);
	\path [ultra thick,<-,>=stealth] (4) edge [right] node {} (2);
	\path [ultra thick,->,>=stealth] (5) edge [below] node {} (3);
	\path [ultra thick,->,>=stealth] (6) edge [above] node {} (4);
	\node [] (0) at (2.5,-4) {\small $T_1$};
	\end{scope}
	\begin{scope}[shift={(0,0)}]
	\node [circle,fill,scale=.5,draw] (1) at (0,10) {};
	\node [circle,fill,scale=.5,draw] (2) at (5,10) {};
	\node [circle,fill,scale=.5,draw] (3) at (0,5) {};
	\node [circle,fill,scale=.5,draw] (4) at (5,5) {};
	\node [circle,fill,scale=.5,draw] (5) at (0,0) {};		
	\node [circle,fill,scale=.5,draw] (6) at (5,0) {};		
	\path [thick,dashed,<-,>=stealth] (1) edge [left] node {} (2);
	\path [ultra thick,->,>=stealth] (3) edge [above] node {} (4);
	\path [thick,->,>=stealth,dashed] (5) edge [right] node {} (6);
	\path [ultra thick,->,>=stealth] (3) edge [below] node {} (1);
	\path [ultra thick,<-,>=stealth] (4) edge [right] node {} (2);
	\path [ultra thick,->,>=stealth] (5) edge [below] node {} (3);
	\path [ultra thick,->,>=stealth] (6) edge [above] node {} (4);
	\node [] (0) at (2.5,-4) {\small $T_2$};
	\end{scope}
	\begin{scope}[shift={(10,0)}]
	\node [circle,fill,scale=.5,draw] (1) at (0,10) {};
	\node [circle,fill,scale=.5,draw] (2) at (5,10) {};
	\node [circle,fill,scale=.5,draw] (3) at (0,5) {};
	\node [circle,fill,scale=.5,draw] (4) at (5,5) {};
	\node [circle,fill,scale=.5,draw] (5) at (0,0) {};		
	\node [circle,fill,scale=.5,draw] (6) at (5,0) {};		
	\path [ultra thick,<-,>=stealth] (1) edge [left] node {} (2);
	\path [thick,dashed,->,>=stealth] (3) edge [above] node {} (4);
	\path [ultra thick,->,>=stealth] (5) edge [right] node {} (6);
	\path [ultra thick,->,>=stealth] (3) edge [below] node {} (1);
	\path [ultra thick,<-,>=stealth] (4) edge [right] node {} (2);
	\path [ultra thick,->,>=stealth] (5) edge [below] node {} (3);
	\path [thick,dashed,->,>=stealth] (6) edge [above] node {} (4);
	\node [] (0) at (2.5,-4) {\small $T_3$};
	\end{scope}
	\begin{scope}[shift={(20,0)}]
	\node [circle,fill,scale=.5,draw] (1) at (0,10) {};
	\node [circle,fill,scale=.5,draw] (2) at (5,10) {};
	\node [circle,fill,scale=.5,draw] (3) at (0,5) {};
	\node [circle,fill,scale=.5,draw] (4) at (5,5) {};
	\node [circle,fill,scale=.5,draw] (5) at (0,0) {};		
	\node [circle,fill,scale=.5,draw] (6) at (5,0) {};		
	\path [thick,dashed,<-,>=stealth] (1) edge [left] node {} (2);
	\path [ultra thick,->,>=stealth] (3) edge [above] node {} (4);
	\path [ultra thick,->,>=stealth] (5) edge [right] node {} (6);
	\path [ultra thick,->,>=stealth] (3) edge [below] node {} (1);
	\path [ultra thick,<-,>=stealth] (4) edge [right] node {} (2);
	\path [ultra thick,->,>=stealth] (5) edge [below] node {} (3);
	\path [thick,->,>=stealth,dashed] (6) edge [above] node {} (4);
	\node [] (0) at (2.5,-4) {\small $T_4$};
	\end{scope}
	\end{tikzpicture}
	\end{center}
	\caption{A semi-balanced digraph (first panel), and four of its spanning trees (panels 2-5) that form a dissecting tree set.}
	\label{fig:semi-balanced_ex}
\end{figure}
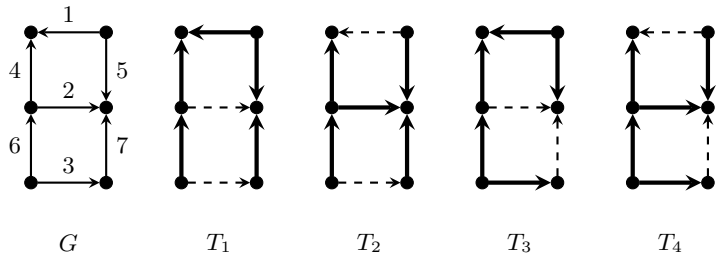

The proofs of Theorems 
\ref{thm:sym_edge_h-vector_fixed_order} and 
\ref{thm:interior_poly_fixed_edge_order}
are almost identical, too. In fact, even the proof of Theorem \ref{thm:sym_edge_h-vector} uses the same underlying principle, although in a substantially different way. That principle, cf.\ Lemma \ref{lem:visibility_disjoint} and Proposition  \ref{prop:h^*-vector_from_visible_facets}, was discerned from Higashitani et al.'s work \cite{arithm_symedgepoly}, where it appears under more restrictive conditions. It states that if we fix a dissection and a generic interior point $\mathbf q$ of the polytope $P$, then the parts of the simplices that are invisible from $\mathbf q$ form a decomposition of $P$; furthermore, if the simplices are unimodular, then the decomposition can be used to compute the $h^*$-vector of $P$. With that, our proofs become a matter of finding the right $\mathbf q$ so that `visibility' takes on a suitable interpretation.

As corollaries to Theorems \ref{thm:sym_edge_h-vector_fixed_order} and \ref{thm:interior_poly_fixed_edge_order}, we obtain that both for symmetric edge polytopes and for root polytopes of semi-balanced digraphs, the distribution of internal semi-passivity over a dissecting tree set is independent of the edge ordering, and even the actual dissection.

\begin{coroll}
    Let $G$ be a bidirected graph and let $i\in \{0, \dots, |V(G)|-1\}$ be arbitrary.
	For any dissecting tree set $\mathcal{T}$, and any ordering of the (directed) edges of $G$, the value $$|\{T\in \mathcal{T}\mid T \text{ has exactly $i$ internally semi-passive edges}\}|$$ is the same.
\end{coroll}

\begin{coroll}
    Let $G$ be a semi-balanced digraph. 
	Let $i\in \{0, \dots, |V(G)|-1\}$ be arbitrary.
	For any dissecting tree set $\mathcal{T}$, and any ordering of the edges of $G$, the value $$|\{T\in \mathcal{T}\mid T \text{ has exactly $i$ internally semi-passive edges}\}|$$ is the same.
\end{coroll}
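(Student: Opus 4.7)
The plan is to obtain this as an immediate consequence of Theorem \ref{thm:interior_poly_fixed_edge_order}, with essentially no additional combinatorial work. That theorem asserts that for any semi-balanced digraph $G$, any dissecting tree set $\mathcal{T}$ for $G$, and any ordering of the edges, one has
\[
(h^*_{\mathcal{Q}_G})_i = |\{T\in \mathcal{T}\mid T \text{ has exactly $i$ internally semi-passive edges}\}|.
\]
The quantity on the left is intrinsically attached to the lattice polytope $\mathcal{Q}_G$ and depends only on $G$; in particular it is oblivious to the auxiliary choices of $\mathcal{T}$ and of the ordering. Hence the right-hand side must coincide for all admissible pairs (dissecting tree set, edge ordering), which is precisely the assertion of the corollary.

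Concretely, I would fix two arbitrary pairs $(\mathcal{T}_1,\prec_1)$ and $(\mathcal{T}_2,\prec_2)$, apply Theorem \ref{thm:interior_poly_fixed_edge_order} to each, and read off that the respective counts of trees with exactly $i$ internally semi-passive edges are both equal to the polytopal invariant $(h^*_{\mathcal{Q}_G})_i$, whence they are equal to each other. No direct exchange argument comparing different dissections or different orderings is needed: all such comparisons are mediated through the common $h^*$-value.

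The only point deserving a quick sanity check is that Theorem \ref{thm:interior_poly_fixed_edge_order} really does quantify over \emph{all} dissecting tree sets and \emph{all} edge orderings with no hidden compatibility condition between them. Inspecting its statement confirms that $\mathcal{T}$ and the ordering are introduced independently and the conclusion is asserted uniformly. Consequently there is no genuine obstacle; the corollary is a one-line deduction identifying two expressions for the same integer $(h^*_{\mathcal{Q}_G})_i$, exactly parallel to how the earlier Corollary is derived from Theorem \ref{thm:sym_edge_h-vector}.
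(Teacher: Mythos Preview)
Your proposal is correct and matches the paper's approach exactly: the corollary is stated immediately after Theorem \ref{thm:interior_poly_fixed_edge_order} as a direct consequence, with the only justification being that the $h^*$-vector of $\mathcal{Q}_G$ is an intrinsic invariant of the polytope, independent of the auxiliary choices of dissecting tree set and edge ordering.
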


We end this introduction by drawing attention to a certain dichotomy of approaches to activity statistics, and by positioning our work in it. 
The Tutte polynomial was originally constructed as the generating function of activities with respect to a fixed edge order \cite{tutte}. On the other hand, Bernardi presents the Tutte polynomial as a generating function of embedding activities \cite{Bernardi_Tutte} that depend not on a fixed order but on a ribbon structure. 
Of course, neither type of auxiliary data influences the final outcome. Also, neither definition is a special case of the other.

In the previous papers \cite{semibalanced,sym_ribbon} we gave formulas for the $h^*$-polynomials of symmetric edge polytopes and of root polytopes (of semi-balanced digraphs) that were inspired by Bernardi's ideas.
The results of this paper, especially Theorems \ref{thm:sym_edge_h-vector_fixed_order} and 
\ref{thm:interior_poly_fixed_edge_order}, follow Tutte's approach instead. Between the formulas of \cite{semibalanced,sym_ribbon} and those of this paper, neither generalizes the other. However, in some sense, the claims of this paper are more robust because they work for \emph{any} dissecting tree set, while in \cite{semibalanced,sym_ribbon} we considered a specific dissecting tree set (the aforementioned Jaeger trees) that also depended on the embedding. In Section \ref{s:open_problems}, we explore this ``lack of robustness'' in more detail, and pose it as an open question to explain its background.

\section{Preliminaries on Ehrhart theory and $h^*$-polynomials}\label{sec:prelim}

Suppose that $P\subset\R^n$ is a $d$-dimensional lattice polytope, that is a 
polytope with vertices in $\Z^n$. 
Its \emph{Ehrhart polynomial} $\varepsilon_P$ associates, to each $t\in \mathbb{N}$, the point count $\varepsilon_P(t)=|(t\cdot P) \cap \mathbb{Z}^n|$. This is indeed known to be a polynomial in $t$ (which then can be extended to arbitrary reals). 

It is easy to check that the polynomials $C_k(t)=\binom{t+d-k}{d}$, for $k=0, \dots, d$, provide a basis over $\mathbb{Q}$ for the space of 
polynomials, of degree $d$ or lower, with rational coefficients \cite[Lemma 3.8]{KP_Ehrhart}. Hence one can uniquely write the Ehrhart polynomial as
$$\varepsilon_P(t) = \sum_{k=0}^d a_k C_k(t),$$
and define the \emph{$h^*$-polynomial} (or \emph{$h^*$-vector}) of $P$ as $h^*(x)=\sum_{k=0}^d a_k x^k$.
(There is another common way to define the $h^*$-polynomial, namely as the numerator of the Ehrhart series. That definition is equivalent to the one given above.)

In this paper we will compute the $h^*$-polynomials of polytopes by dissecting them into unimodular simplices, and then counting facets visible from an interior point of general position. This 
is a slight modification of the method of \cite{KV08_Barvinok_alg}, where $h$-vectors of triangulations are computed using the distribution of visible facets. Another inspiration is \cite{arithm_symedgepoly}, where the method of \cite{KV08_Barvinok_alg} is applied to unimodular triangulations to compute $h^*$-vectors. Our approach is stronger in that we do not require a unimodular triangulation, only a unimodular dissection.

Here a $d$-simplex $\conv\{\mathbf p_0, \mathbf p_1,\dots, \mathbf p_d\}\subset\mathbb R^n$ is called
\emph{unimodular} if its vertices satisfy $\mathbf p_0,\mathbf p_1, \dots, \mathbf p_d \in \mathbb{Z}^n$ and, in addition to $\mathbf p_1 - \mathbf p_0, \dots, \mathbf p_d-\mathbf p_0$ being linearly independent over $\mathbb R$, we also have that 
\begin{equation}
    \label{eq:unimodular}
\mathbb Z\langle\mathbf p_1 - \mathbf p_0, \dots, \mathbf p_d-\mathbf p_0\rangle=\mathbb R\langle\mathbf p_1 - \mathbf p_0, \dots, \mathbf p_d-\mathbf p_0\rangle\cap\mathbb Z^n.
\end{equation}
Whether this condition holds is independent of the choice of $\mathbf p_0$, and certainly of the rest of the order of the vertices.

Let $P$ be a $d$-dimensional lattice polytope and $\Delta_1, \dots, \Delta_s$ a dissection of $P$ into unimodular simplices. That is, $\Delta_1, \dots, \Delta_s$ are 
(relative)
interior-disjoint unimodular $d$-simplices such that $P=\Delta_1 \cup \dots \cup \Delta_s$.

Let $\mathbf{q}\in P$ be a point of general position with respect to the dissection 
$\Delta_1, \dots, \Delta_s$. By this, we mean that $\mathbf{q}$ is not contained in any facet-defining hyperplane of any of the simplices $\Delta_1, \dots, \Delta_s$.

For two points $\mathbf{p},\mathbf{q}\in \mathbb{R}^n$, let us denote by $[\mathbf p,\mathbf q]$ the closed segment connecting them, and let us denote by $(\mathbf{p},\mathbf{q})$ the relative interior of this segment.

We say that a point $\mathbf{p}\neq \mathbf q$ of a simplex $\Delta_i$ is \emph{visible} from $\mathbf q$ if $(\mathbf{p},\mathbf{q})$ is disjoint from $\Delta_i$. We say that a facet of $\Delta_i$ is visible from $\mathbf{q}$ if all points of the facet are visible from $\mathbf{q}$. 
It is easy to see that a facet of $\Delta_i$ is visible from $\mathbf q$ if and only if its hyperplane separates $\mathbf q$ from the interior of $\Delta_i$.

Note that a point $\mathbf{p}\in \Delta_i$ is visible from $\mathbf{q}$ if and only if it belongs to a facet of $\Delta_i$ visible from $\mathbf{q}$. The ``if'' direction is clear by definition. For the ``only if'' direction, take a point 
$\mathbf{p}\in \Delta_i$ visible from $\mathbf{q}$. As  $(\mathbf{p},\mathbf{q})$ is disjoint from $\Delta_i$, we can take a supporting half-space of $\Delta_i$ that does not contain $\mathbf{q}$, moreover, $\mathbf{p}$ is on the supporting hyperplane. But then, by the general position of $\mathbf q$, there is also a facet-defining hyperplane containing $\mathbf{p}$ and separating $\mathbf{q}$ from the interior of $\Delta_i$. Now this facet will be visible from $\mathbf{q}$, and it contains $\mathbf{p}$.

For a simplex $\Delta$, let $\Vis_{\mathbf q}(\Delta)$ be the set of facets of $\Delta$ that are visible from $\mathbf{q}$.
Let also $H_\mathbf{q}(\Delta) = \Delta - \bigcup_{F\in \Vis_{\mathbf{q}}(\Delta)} F$. I.e., we remove the visible facets from $\Delta$. By the above, this is the same as removing all the visible points from $\Delta$.

\begin{lemma}\label{lem:visibility_disjoint}
Let $\Delta_1, \dots, \Delta_s$ be a dissection of the polytope $P$, and let $\mathbf{q}\in P$ be a point in general position with respect to the dissection. Then $P=H_\mathbf{q}(\Delta_1) \sqcup \dots \sqcup H_\mathbf{q}(\Delta_s)$ is a disjoint union.
\end{lemma}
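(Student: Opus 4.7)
The approach is to translate the definition of $H_\mathbf{q}(\Delta_i)$ into a visibility condition along the line $L$ through $\mathbf{p}$ and $\mathbf{q}$ and then argue existence and uniqueness separately. By the characterization just before the lemma, for $\mathbf{p}\in\Delta_i\setminus\{\mathbf{q}\}$ we have $\mathbf{p}\in H_\mathbf{q}(\Delta_i)$ iff $\mathbf{p}$ is not visible from $\mathbf{q}$; using the convexity of $\Delta_i$, this is equivalent to the existence of some $\delta>0$ with $\mathbf{p}_\epsilon:=(1-\epsilon)\mathbf{p}+\epsilon\mathbf{q}\in\Delta_i$ for all $\epsilon\in[0,\delta]$. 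The lemma then amounts to showing that every $\mathbf{p}\in P$ lies in $H_\mathbf{q}(\Delta_i)$ for exactly one $i$.

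For existence, assume first $\mathbf{p}\neq\mathbf{q}$ and consider $\mathbf{p}_\epsilon\in P$ for $\epsilon\in(0,1]$. The crucial observation is that general position of $\mathbf{q}$ forces $\mathbf{q}$ to lie off every facet-defining hyperplane of every $\Delta_k$, so no such hyperplane contains the line $L$; hence each of them meets $L$ in at most one point. Since only finitely many hyperplanes are in play, there exists $\epsilon_0>0$ such that for all $\epsilon\in(0,\epsilon_0)$ the point $\mathbf{p}_\epsilon$ sits off every facet-hyperplane, and therefore in the interior of a single simplex. By local constancy and connectedness of $(0,\epsilon_0)$, this simplex $\Delta_j$ is the same throughout. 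Letting $\epsilon\to 0$ yields $\mathbf{p}\in\overline{\Delta_j}=\Delta_j$ together with a nondegenerate initial segment $\{\mathbf{p}_\epsilon:\epsilon\in[0,\epsilon_0)\}\subset\Delta_j$, so by the equivalence above $\mathbf{p}\in H_\mathbf{q}(\Delta_j)$. The case $\mathbf{p}=\mathbf{q}$ is handled separately: general position places $\mathbf{q}$ in the interior of a single $\Delta_{i_0}$, giving $\mathbf{q}\in H_\mathbf{q}(\Delta_{i_0})$ and $\mathbf{q}\notin H_\mathbf{q}(\Delta_k)$ for $k\neq i_0$.

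For uniqueness, suppose $\mathbf{p}\in H_\mathbf{q}(\Delta_i)\cap H_\mathbf{q}(\Delta_j)$ with $i\neq j$ (the case $\mathbf{p}=\mathbf{q}$ is already covered). Then a common nondegenerate segment $[\mathbf{p},\mathbf{p}_\delta]$ lies in $\Delta_i\cap\Delta_j$; by interior-disjointness this intersection lies in $\partial\Delta_i$, and so the segment is covered by the finitely many facets of $\Delta_i$. Some such facet $F$ therefore contains two distinct points of $L$, and hence its supporting hyperplane $H$ must contain all of $L$—in particular $\mathbf{q}\in H$, contradicting the general position of $\mathbf{q}$. The main technical point I expect to grind on is precisely this last step: converting the fact that two simplices share an initial segment along $L$ into the fact that some facet-hyperplane contains all of $L$. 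Once that link is established, both existence and uniqueness fall out quickly from interior-disjointness of the dissection and the general position assumption.
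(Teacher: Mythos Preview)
Your proof is correct and takes essentially the same route as the paper's: both reduce to showing that the open segment $(\mathbf{p},\mathbf{q})$ enters exactly one simplex containing $\mathbf{p}$, using that (by general position of $\mathbf{q}$) no facet-defining hyperplane can contain the line $L$ through $\mathbf{p}$ and $\mathbf{q}$. The ``technical point'' you flag is in fact immediate---a nondegenerate segment cannot be covered by finitely many hyperplanes each meeting $L$ in a single point---and the paper handles it the same way; the only cosmetic difference is that the paper first restricts to a neighborhood of $\mathbf{p}$ avoiding the simplices not through $\mathbf{p}$, whereas you argue directly that $\mathbf{p}_\epsilon$ avoids all facet hyperplanes for small $\epsilon$.
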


We will refer to this decomposition as a \emph{half-open cover} of $P$.

\begin{remark}
The statement of Lemma \ref{lem:visibility_disjoint} is motivated by \cite[Theorem 3]{KV08_Barvinok_alg}, which deals with how an identity of indicator functions of polyhedra behaves under the operation $H_{\mathbf q}$, and by its application in \cite{arithm_symedgepoly}.
\end{remark}

\begin{proof}
The containment $P\supset H_\mathbf{q}(\Delta_1) \cup \dots \cup H_\mathbf{q}(\Delta_s)$ is clear from the definitions.
    Let now $\mathbf{p}\in P$ be an arbitrary point. We need to show that it is contained by exactly one set $H_\mathbf{q}(\Delta_i)$ of the half-open cover. If $\mathbf{p}=\mathbf{q}$, then by general position, $\mathbf{p}$ is 
    interior 
    to one of the 
    $\Delta_i$, whence it is contained only by $H_\mathbf{q}(\Delta_i)$.

    By renumbering the simplices of the dissection, we can suppose that $\mathbf{p}\in \Delta_1 \cap \dots \cap \Delta_r$ and $\mathbf p\not\in\Delta_{r+1}\cup\cdots\cup\Delta_s$, where $r\geq 1$. 
    Since the latter set is closed, $\mathbf p$ has a neighborhood $U\subset P$ (with respect to the relative topology of $P$)
    that is disjoint from it.
    We claim that if $\mathbf{p}\neq \mathbf{q}$, then the open segment $(\mathbf{p},\mathbf{q})$ intersects exactly one of  $\Delta_1,\ldots,\Delta_r$.
    We start by showing that it intersects at least one of those simplices. By convexity, we have $(\mathbf p,\mathbf q)\subset P=\Delta_1\cup \dots \cup \Delta_s$; we also have that a sub-segment of $(\mathbf p,\mathbf q)$ (`near' $\mathbf p$) lies in $U$. Therefore
    $(\mathbf{p},\mathbf{q})$ needs to intersect a simplex incident to $\mathbf{p}$.
    
    Now suppose that $(\mathbf{p},\mathbf{q})$ intersects more than one of
    $\Delta_1, \dots, \Delta_r$.
    If the segment intersects $\Delta_i$ at $\mathbf{x}\neq \mathbf{p}$ and $\Delta_j$ at $\mathbf{y}\neq \mathbf{p}$, then by the convexity of simplices, we have $[\mathbf{x},\mathbf{p}] \subset \Delta_i$ and $[\mathbf{y},\mathbf{p}] \subset \Delta_j$. Hence one of the points $\mathbf{x}$ and $\mathbf{y}$, say $\mathbf x$, 
    is in $\Delta_i \cap \Delta_j$. 
    Now $\mathbf{p}\in \Delta_i \cap \Delta_j$ 
    implies $[\mathbf x,\mathbf p]\subset\Delta_i\cap\Delta_j$, in particular we see that $[\mathbf x,\mathbf p]$ lies along the boundary of $\Delta_i$. That means that
    the whole segment $[\mathbf{p},\mathbf{q}]$ is in 
    a facet-defining
    hyperplane of $\Delta_i$,
    contradicting the general position of $\mathbf{q}$.
    
    Hence there is exactly one simplex $\Delta_i$, with $i\le r$, such that $(\mathbf{p},\mathbf{q})$ intersects $\Delta_i$. 
    I.e., $\Delta_i$ is the only simplex containing $\mathbf{p}$ such that $\mathbf{p}$ is not visible from $\mathbf{q}$. Thus, in the half-open cover, $H_\mathbf{q}(\Delta_i)$ is the unique set that contains $\mathbf p$.
\end{proof}

The following well-known lemma plays a crucial role for us. We include a short proof for completeness.

\begin{lemma}\label{lemma:interior_point_integer_coordinates}
    Let $\Delta = \conv\{\mathbf{p}_0, \dots, \mathbf{p}_d\}$ be a unimodular $d$-simplex. For any positive integer $t$, a point $\mathbf{p}\in t\cdot\Delta$ is a lattice point if and only if $\mathbf{p} = \sum_{i=0}^d \mu_i \mathbf{p}_i$, where
    $\sum_{i=0}^d\mu_i=t$ and
    each $\mu_i$ is a non-negative integer.
\end{lemma}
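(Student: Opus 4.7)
The plan is to handle the two directions separately, with the unimodularity hypothesis only needed for the ``only if'' direction. For the ``if'' direction, if $\mathbf p=\sum_{i=0}^d\mu_i\mathbf p_i$ with non-negative integers $\mu_i$ summing to $t$, then $\mathbf p$ is automatically a $\mathbb Z$-linear combination of the integer points $\mathbf p_0,\dots,\mathbf p_d$, hence a lattice point, and $\mathbf p/t=\sum(\mu_i/t)\mathbf p_i$ is a convex combination, placing $\mathbf p$ in $t\cdot\Delta$.

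For the ``only if'' direction, suppose $\mathbf p\in t\cdot\Delta\cap\mathbb Z^n$. Then $\mathbf p/t\in\Delta$ can be written uniquely as $\sum_{i=0}^d\lambda_i\mathbf p_i$ with $\lambda_i\ge 0$ and $\sum\lambda_i=1$, and setting $\mu_i=t\lambda_i$ gives $\mathbf p=\sum\mu_i\mathbf p_i$ with $\mu_i\ge 0$ and $\sum\mu_i=t$. All that remains is to show each $\mu_i$ is an integer. First I would rewrite this as
\[
\mathbf p-t\mathbf p_0=\sum_{i=1}^d\mu_i(\mathbf p_i-\mathbf p_0).
\]
The left-hand side lies in $\mathbb Z^n$ because both $\mathbf p$ and $t\mathbf p_0$ do, and of course it also lies in the real span $\mathbb R\langle\mathbf p_1-\mathbf p_0,\dots,\mathbf p_d-\mathbf p_0\rangle$.

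This is exactly where the unimodularity assumption \eqref{eq:unimodular} comes in: it guarantees that any lattice vector in that real span is already an integral combination of $\mathbf p_1-\mathbf p_0,\dots,\mathbf p_d-\mathbf p_0$. Hence there exist integers $n_1,\dots,n_d$ with $\mathbf p-t\mathbf p_0=\sum_{i=1}^d n_i(\mathbf p_i-\mathbf p_0)$, and by the linear independence of the $\mathbf p_i-\mathbf p_0$ we must have $n_i=\mu_i$ for each $i\ge 1$. Thus $\mu_1,\dots,\mu_d\in\mathbb Z$, and then $\mu_0=t-\sum_{i=1}^d\mu_i\in\mathbb Z$ as well, which completes the argument.

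There is no real obstacle here; the only subtle point is recognizing that unimodularity is precisely the condition that converts the \emph{a priori} rational barycentric coordinates of a lattice point into integer coordinates, which is why the lemma fails for non-unimodular simplices.
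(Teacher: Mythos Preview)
Your proof is correct and follows essentially the same route as the paper's: both rewrite $\mathbf p=\sum\mu_i\mathbf p_i$ as $\mathbf p-t\mathbf p_0=\sum_{i\ge1}\mu_i(\mathbf p_i-\mathbf p_0)$, observe that the left side is a lattice vector in the real span, and then invoke the unimodularity condition \eqref{eq:unimodular} together with linear independence to force the $\mu_i$ to be integers. The only difference is presentational---you separate the two directions and spell out $\mu_0=t-\sum_{i\ge1}\mu_i$ explicitly, whereas the paper treats both directions at once.
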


\begin{proof}
	If $\mathbf{p}\in t\cdot \Delta$ then $\mathbf{p}=\sum_{i=0}^d \mu_i \mathbf{p}_i$, where $\mu_0, \dots, \mu_d \geq 0$ and  $\sum_{i=1}^d \mu_i = t$. This can be re-written as $\sum_{i=0}^d \mu_i (\mathbf{p}_i-\mathbf{p}_0) = \mathbf{p} - t\cdot \mathbf{p}_0$. The right hand side here is an integer vector 
	if and only if $\mathbf p$ is a lattice point.
By \eqref{eq:unimodular}, this is also equivalent to the claim that the left hand side is not only in the $\mathbb R$-span, but also in the $\mathbb Z$-span of $\{\mathbf{p}_i-\mathbf{p}_0\mid1\le i\le d\}$; by linear independence, this happens exactly when
	the coefficients $\mu_i$ are integer.
\end{proof}

Putting the ingredients together leads to the following proposition, which generalizes \cite[Proposition 2.1]{arithm_symedgepoly} to unimodular dissections.

\begin{prop} \label{prop:h^*-vector_from_visible_facets}
Let $\Delta_1, \dots, \Delta_s$ be a dissection of the $d$-dimensional lattice polytope $P\subset\mathbb R^n$ into unimodular simplices, and let $\mathbf{q}\in P$ be a point in general position with respect to the dissection. 
Then the $h^*$-polynomial $h^*(x)=h^*_d x^d +  \dots + h^*_1 x + h^*_0$ of $P$ 
has the coefficients
\[h^*_i = |\{ j \mid 1\le j\le s\text{ and } |\Vis_\mathbf{q}(\Delta_j)| = i\}|.\]
\end{prop}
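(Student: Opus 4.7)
The plan is to compute the Ehrhart polynomial $\varepsilon_P(t)$ for positive integers $t$ by combining the half-open cover of Lemma~\ref{lem:visibility_disjoint} with the lattice-point count of Lemma~\ref{lemma:interior_point_integer_coordinates}, and then reading off the $h^*$-coefficients from the uniqueness of the expansion in the basis $\{C_0,\dots,C_d\}$. Scaling the disjoint union of Lemma~\ref{lem:visibility_disjoint} by $t$ gives $t\cdot P=\bigsqcup_{j=1}^s t\cdot H_{\mathbf q}(\Delta_j)$, so
\[\varepsilon_P(t)=\sum_{j=1}^s\bigl|t\cdot H_{\mathbf q}(\Delta_j)\cap\Z^n\bigr|,\]
and it suffices to evaluate each of these lattice-point counts.

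Fix $j$, write $\Delta_j=\conv\{\mathbf p_0,\dots,\mathbf p_d\}$, and for each $i$ let $F_i\subset\Delta_j$ be the facet opposite $\mathbf p_i$. By Lemma~\ref{lemma:interior_point_integer_coordinates}, the lattice points of $t\cdot\Delta_j$ are in bijection with tuples $(\mu_0,\dots,\mu_d)$ of non-negative integers with $\sum_i\mu_i=t$, and under this bijection a point lies in $F_i$ precisely when $\mu_i=0$. Hence the lattice points in
\[t\cdot H_{\mathbf q}(\Delta_j)=t\cdot\Delta_j\setminus\bigcup_{F_i\in\Vis_{\mathbf q}(\Delta_j)}t\cdot F_i\]
correspond to those tuples for which $\mu_i\ge 1$ for every $i$ such that $F_i$ is visible from $\mathbf q$. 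Writing $k_j=|\Vis_{\mathbf q}(\Delta_j)|$ and substituting $\mu_i':=\mu_i-1$ at these $k_j$ forced indices, the count becomes the number of $(d+1)$-tuples of non-negative integers summing to $t-k_j$, which equals $\binom{t+d-k_j}{d}=C_{k_j}(t)$.

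Summing over $j$ yields $\varepsilon_P(t)=\sum_{j=1}^s C_{k_j}(t)=\sum_{i=0}^d n_i\,C_i(t)$, where $n_i=|\{j:k_j=i\}|$ is exactly the count on the right-hand side of the proposition. Since $\{C_0,\dots,C_d\}$ is a $\mathbb Q$-basis for rational polynomials of degree at most $d$, this expansion is unique, so $h^*_i=n_i$ by the definition of the $h^*$-vector. The only delicate point is the passage from ``lies on no visible facet'' to ``$\mu_i\ge 1$ for every visible-facet index'', which relies on two features of our setup: $\Delta_j$ is a \emph{simplex}, so its facets are in bijection with its vertices, and $\Delta_j$ is \emph{unimodular}, which is what makes Lemma~\ref{lemma:interior_point_integer_coordinates} available to convert containment in $F_i$ into the clean coordinate condition $\mu_i=0$.
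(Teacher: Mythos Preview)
Your proof is correct and follows essentially the same approach as the paper's: both use Lemma~\ref{lem:visibility_disjoint} to obtain the half-open cover, scale by $t$, invoke Lemma~\ref{lemma:interior_point_integer_coordinates} to identify lattice points with integer tuples, translate ``not on a visible facet'' into positivity of the corresponding coordinates, and conclude by the uniqueness of the expansion in the basis $\{C_0,\dots,C_d\}$. Your exposition is in fact slightly more explicit than the paper's (which dispatches the count via ``another classical argument'') in spelling out the substitution $\mu_i'=\mu_i-1$ and in flagging where unimodularity and the simplex structure are actually used.
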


\begin{remark}
The unimodularity of the dissection is crucial for the above statement to hold.
\end{remark}

\begin{proof}
By Lemma \ref{lem:visibility_disjoint}, we have the disjoint union (half-open cover) \[P=H_\mathbf{q}(\Delta_1) \sqcup \dots \sqcup H_\mathbf{q}(\Delta_s).\]
Let $t\in\mathbb{N}$ be a positive integer, and let us compute the cardinality $\varepsilon_P(t)=|(t\cdot P) \cap \mathbb{Z}^n|$. We have $t\cdot P = t\cdot H_\mathbf{q}(\Delta_1) \sqcup \dots \sqcup t\cdot H_\mathbf{q}(\Delta_t)$, which is still a disjoint union, hence it is enough to determine the number of lattice points in $t\cdot H_\mathbf{q}(\Delta)$ for a unimodular simplex $\Delta$. We claim that if $|\Vis_\mathbf{q}(\Delta)|= k$, then
$|(t\cdot H_\mathbf{q}(\Delta)) \cap \mathbb{Z}^n|=C_k(t)=\binom{t+d-k}{d}$.

First, suppose that $|\Vis_\mathbf{q}(\Delta)|=0$. Then we need to count lattice points within $t\cdot \Delta$. By Lemma \ref{lemma:interior_point_integer_coordinates}, the number of lattice points equals the number of non-negative integer $(d+1)$-tuples summing to $t$. This is indeed equal to $\binom{d+t}{d}=C_0(t)$ by a classical argument.

Now if $\Vis_\mathbf{q}(\Delta)=k$, then we remove $k$ facets from $\Delta$ to get $H_\mathbf{q}(\Delta)$. A (lattice) point lies along a certain facet of $\Delta$ if and only if in the convex combination of the vertices that produces it, the coefficient of the opposite vertex is zero. Hence in this case we need to count non-negative $(d+1)$-tuples where elements of a prescribed set of $k$ entries are strictly positive. The number of such tuples is $\binom{d+t-k}{d}=C_k(t)$ by another classical argument.

Now if we let $a_k= |\{\Delta_j \mid |\Vis_\mathbf{q}(\Delta_j)| = k\}|$ and summarize our count of lattice points across the pieces of the half-open cover, we get $\varepsilon_P(t)=\sum_{k=0}^d  a_k C_k(t)$ for each non-negative integer $t$. Hence indeed $h^*_k = a_k$ for all $k$.
\end{proof}

\section{Proofs of Theorems \ref{thm:sym_edge_h-vector}, \ref{thm:sym_edge_h-vector_fixed_order}, and \ref{thm:interior_poly_fixed_edge_order}}\label{sec:proofs}

Let us use the notation $\mathbf{x}_e$ for the vector $\mathbf{1}_h-\mathbf{1}_t\in\mathbb R^V$, where $e=\overrightarrow{th}$ is a directed edge in some graph on the vertex set $V$.

\begin{proof}[Proof of Theorem \ref{thm:sym_edge_h-vector}]
    Our proof is a very slight modification of the proof of \cite[Proposition 4.6]{arithm_symedgepoly}, that concerns a special case: a concrete family of triangulations of symmetric edge  polytopes of complete bipartite graphs.
	The main engine of the proof is Proposition \ref{prop:h^*-vector_from_visible_facets}.
	
	Let us fix a dissecting tree set $\mathcal{T}$ and a vertex $v\in V= V(\mathcal{G})$.
	We look for
	a point $\mathbf{q}$ in the interior of $P_{\mathcal{G}}$, such that for each tree $T\in\mathcal T$
	the number of facets, visible from $\mathbf q$, of the simplex $\tilde{\mathcal{Q}}_T$ of the dissection is equal to the number of edges of $T$ pointing away from $v$.
	
	We choose $\mathbf q=\sum_{u\in V}q_u\mathbf1_u$ so that $q_v > 0$ and for $u\neq v$ we have $q_u < 0$; moreover, we let $\sum_{u\in V} q_u = 0$.
	As $P_\mathcal{G}$ contains a ball around the origin within the subspace $\{\mathbf z\in \mathbb{R}^V\mid\sum_{u\in V} z_u=0\}$, by multiplying $\mathbf q$ with an appropriately small constant, we may also suppose that $\mathbf q \in P_\mathcal{G}$.
	We show that this $\mathbf q$
	has the properties that we are looking for.
	
	Let $T\in \mathcal{T}$ be a (directed) spanning tree from the dissecting tree set. The simplex $\tilde{\mathcal{Q}}_T$ has two types of facets. One facet, $\mathcal{Q}_{T}=\Conv\{ \mathbf{x}_f \mid f\in T\}$, corresponds to removing the origin. The rest of the facets correspond to removing one of the tree edges, i.e., they take the form $\tilde{\mathcal{Q}}_{T-e}=\Conv(\{\mathbf{0}\}\cup\{ \mathbf{x}_f \mid f\in T-e\})$ for some $e\in T$. 
	
	A facet ${\mathcal{Q}}_{T}$ is never visible from $\mathbf q$
	since it
	lies along the boundary of $P_\mathcal{G}$. 
	On the other hand, a  facet $\tilde{\mathcal{Q}}_{T-e}$ is visible from $\mathbf q$ if any only if $e$ is pointing away from $v$ in $T$. 
	
	To show the latter claim, let us consider the hyperplane of $\tilde{\mathcal{Q}}_{T-e}$ and the fundamental cut $C^*(T,e)$. Let $V_0$ and $V_1$ be the two shores of the cut, with $V_1$ containing the head of $e$. Take the linear functional $h\colon\mathbb{R}^V \to \mathbb{R}$ with $h(\mathbf{1}_u)=1$ if $u\in V_1$ and $h(\mathbf{1}_u)=0$ if $u\in V_0$. (This uniquely determines the values of $h$ on $\mathbb{R}^V$.)
	Clearly $h(\mathbf{0})=0$, moreover, $h(\mathbf{x}_f)=0$ for each $f\in T-e$ since both endpoints of these edges are on the same shore of $C^*(T,e)$. Hence the hyperplane $\{\mathbf x\in \mathbb{R}^V\mid h(\mathbf x)=0\}$ 
	contains $\tilde{\mathcal{Q}}_{T-e}$. Now as we also have $h(\mathbf{x}_e)=1$, the facet $\tilde{\mathcal{Q}}_{T-e}$ is visible from $\mathbf q$ if any only if $h(\mathbf q) < 0$.
	As $\sum_{u\in V} q_u = 0$ with $q_v$ being the only positive coordinate, $h(\mathbf q)=\sum_{u\in V_1} q_u$ is negative if and only if $v\notin V_1$ (otherwise $h(\mathbf q)>0$). This is equivalent to $e$ pointing away from $v$ in $T$.
	
	As a byproduct we also see that $\mathbf q$ does not lie on any of the supporting hyperplanes of $\tilde{\mathcal Q}_T$ for any $T\in\mathcal T$, i.e., it is indeed in general position. This completes our proof.
\end{proof}

\begin{proof}[Proof of Theorem \ref{thm:sym_edge_h-vector_fixed_order}]
	We again apply Proposition \ref{prop:h^*-vector_from_visible_facets}.
	
	Let us fix a dissecting tree set $\mathcal{T}$ for $\cG$. We think of $\cG$ as a bidirected graph; that is, we take two oppositely directed edges for each undirected edge. We fix an ordering $<$ of these directed edges, 
	in other words we denote them by $e_1, \dots, e_m$ so that $e_1 < e_2 < \dots < e_m$ (here $m$ is even).
	Our strategy is to find a point $\mathbf{q}$ in the interior of $P_{\cG}$, such that for each simplex $\tilde{\mathcal{Q}}_T$ of the dissection, the number of facets of $\tilde{\mathcal{Q}}_T$ visible from $\mathbf q$ is equal to the internal semi-passivity of the tree $T$ with respect to $<$.
	
	Let $\mathbf q\in \mathbb{R}^V$ be the point
	$$\mathbf q= \sum_{i=1}^m \left(\frac{t^{i}}{\sum_{j=1}^m t^{j}}\right) \mathbf{x}_{e_i},$$
	where $t$ is sufficiently large. (For us, $t=2$ suffices.)
	Then $\mathbf q$ is 
	a convex combination of points of the form $\mathbf{x}_e$, with $e\in E$, hence by definition $\mathbf q\in P_\cG$.
	
	Next, we show that for any spanning tree simplex $\tilde{\mathcal{Q}}_T$, the number of facets visible from $\mathbf q$ is equal to the internal semi-passivity of the tree. At the same time, it will also turn out that $\mathbf q$ does not lie on any of the supporting hyperplanes of the simplices, i.e., $\mathbf q$ is in general position.
	
	Let $T\in \mathcal{T}$ be a tree of the dissecting tree set. The simplex $\tilde{\mathcal{Q}}_T$ has $\mathcal{Q}_T$ as a facet; just as before, this is never visible from $\mathbf q$. The rest of the facets are of the form $\tilde{\mathcal{Q}}_{T-e_k}=\conv\{ \mathbf{x}_{e_j} \mid e_j\in T-e_k\}$ for some edge $e_k\in T$.
		
	We show that the facet $\tilde{\mathcal{Q}}_{T-e_k}$ is visible from $\mathbf q$ if any only if $e_k$ is internally semi-passive in $T$. 
	Just like in the previous proof, 
	the hyperplane of $\tilde{\mathcal{Q}}_{T-e_k}$
	is described as the kernel of 
	the linear functional $h\colon\mathbb{R}^V \to \mathbb{R}$ that has $h(\mathbf{1}_u)=1$ if $u\in V_1$ and $h(\mathbf{1}_u)=0$ if $u\in V_0$; here $V_0,V_1\subset V$ are the shores of the fundamental cut $C^*(T,e_k)$, labeled so that $e_k$ points from $V_0$ to $V_1$. 
	We again have $h(\mathbf{x}_{e_k})=1$ and thus $\tilde{\mathcal{Q}}_{T-e_k}$ is visible from $\mathbf q$ if any only if $h(\mathbf q) < 0$.
	By linearity, we have 
	$$h(\mathbf q)=
	\left.\sum_{i=1}^mt^ih(\mathbf{x}_{e_i})\middle/\sum_{j=1}^m t^{j}\right..$$
	Here non-zero contributions to the numerator come from the edges of $C^*(T,e_k)$. Namely, those $e_i$
	that stand parallel to $e_k$ (that is, have their heads in $V_1$) have $h(\mathbf{x}_{e_i}) = 1$ and the edges of $C^*(T,e_k)$ standing opposite to $e_k$ 
	have $h(\mathbf{x}_{e_i}) = -1$. Hence $h(\mathbf q)<0$ 
	if and only if the largest edge of $C^*(T,e_k)$, according to $<$, stands opposite to $e_k$, i.e., if and only if $e_k$ is internally semi-passive in $T$.
\end{proof}

\begin{proof}[Proof of Theorem \ref{thm:interior_poly_fixed_edge_order}]
	We can apply the same construction for an interior point  $\mathbf{q}\in\mathcal Q_G$ as in the proof of Theorem \ref{thm:sym_edge_h-vector_fixed_order}, only now using the vectors $\mathbf{x}_e$ for edges of $G$ (and an arbitrary order). 
	The rest of the proof carries over word-by-word.
\end{proof}

\section{Dissections, embedding activities, and some open problems}
\label{s:open_problems}

\subsection{A closer look at dissecting tree sets}

We hope we have convinced the reader
that dissecting tree sets are quite remarkable objects. In this section we take a closer look at them, and 
indicate directions for future research.

First, it would be interesting to see a graph-theoretic characterization of dissecting tree sets.

\begin{problem}
\label{jokerdes}
Give a combinatorial characterization of dissecting tree sets for undirected graphs and semi-balanced digraphs.
\end{problem}

We saw that the distributions of many activity statistics agree for all dissecting tree sets. We wonder if the reverse of this statement is true in some sense.

\begin{question}
If for a set $\mathcal{T}$ of spanning trees
(either in an undirected graph or in a semi-balanced digraph), 
the distribution of
the internal semi-activity statistic agrees with the $h^*$-vector 
(of the relevant polytope)
for any fixed edge ordering, does it follow that $\mathcal{T}$ is a dissecting tree set? 
\end{question}

Even though we are unaware of an answer to Problem \ref{jokerdes},
we can at least give a simple 
sufficient condition for a set of spanning trees 
that ensures interior disjointness of the corresponding simplices.
We state it in the case of semi-balanced digraphs, which means that it can also be applied facet-by-facet to the symmetric edge polytope of any graph.

\begin{prop}\label{prop:dissecting_necessary_condition}
Let $\mathcal{T}$ be a set of spanning trees of the semi-balanced digraph $G$ such that for any $T_1, T_2\in\mathcal{T}$ there exists a cut $C^*$ in $G$ such that $T_1\cap C^*\subseteq (C^*)^+$ and $T_2\cap C^*\subseteq (C^*)^-$. (Here by $(C^*)^+$ we mean the edges of $C^*=(V_0,V_1)$ pointing from the shore $V_0$ to $V_1$, and by $(C^*)^-$ we mean the edges of $C^*$ pointing from $V_1$ to $V_0$.)
Then the simplices $\{\mathcal{Q}_T \mid T\in\mathcal{T}\}$ are interior disjoint.
\end{prop}

\begin{proof}
If $\mathcal{T}$ is a set of spanning trees so that the above `cut condition' holds, then for any two trees $T_1,T_2\in\mathcal{T}$, it suffices to find a hyperplane separating the interiors of $\mathcal{Q}_{T_1}$ and $\mathcal{Q}_{T_2}$. 

Let $C^*$ be a cut,
with shores $V_0$ and $V_1$, as stipulated in the Proposition.
We define the linear functional $h\colon\mathbb R^V\to\mathbb R$ by $h(\mathbf{1}_u)=1$ for $u\in V_1$ and $h(\mathbf{1}_u)=0$ for $u\in V_0$. Then $h(\mathbf x_e)\ne 0$ if and only if $e\in C^*$, furthermore $h(\mathbf x_e)=1$ when $e\in(C^*)^+$ and $h(\mathbf x_e)=-1$ when $e\in(C^*)^-$. Hence for each $p\in \mathcal{Q}_{T_1}$ we have $h(p)\geq 0$, and for each $p\in \mathcal{Q}_{T_2}$ we have $h(p)\leq 0$; moreover, for interior points, we have strict inequalities. Therefore indeed, $\mathcal{Q}_{T_1}$ and $\mathcal{Q}_{T_2}$ are interior disjoint.
\end{proof}

Note the level of similarity between this condition and Postnikov's necessary and sufficient local condition for triangulations of root polytopes \cite[Lemma 12.6]{alex} (which is extended to the semi-balanced case in \cite[Lemma 8.7]{semibalanced}).

\subsection{Non-robustness of embedding activities}
\label{ss:embedding_passivities}

Important motivation 
of the present paper comes from our earlier work \cite{semibalanced,sym_ribbon}, where we gave formulas that are analogous to Theorems \ref{thm:interior_poly_fixed_edge_order} and \ref{thm:sym_edge_h-vector_fixed_order}, but use semi-activities defined via a ribbon structure (embedding semi-activities) instead of semi-activities defined via a fixed edge order. 
In this section, we show that the formulas using embedding semi-activities are less robust than the formulas using semi-activities with respect to a fixed order, in that they do not work for \emph{any} dissecting tree set. 
(Unlike the formulas of Theorems \ref{thm:interior_poly_fixed_edge_order} and \ref{thm:sym_edge_h-vector_fixed_order}.) 
The background of this phenomenon
is unclear, and we pose it as an open problem to explain it.

Let us start with the necessary definitions. For more examples and explanations, see \cite[Section 5]{semibalanced}.

Let $G$ be a directed graph. A \emph{ribbon structure} of $G$ is an assignment of a cyclic ordering of the (in- and out-) edges incident to $v$ for each vertex $v$. For an edge $vu$, we denote by $vu^+$ the edge following $vu$ in the cyclic order around $v$. In addition to the ribbon structure, let us also fix a vertex $v_0$ and an edge $v_0v_1$ of $G$ (we do not care about the orientation of $v_0v_1$). We call the pair $(v_0,v_0v_1)$ the \emph{basis}. 

Let $T$ be a spanning tree of $G$. Given a ribbon structure and the basis
$(v_0,v_0v_1)$, 
the \emph{tour of $T$}, originally defined by Bernardi \cite{Bernardi_first}, is a sequence of node-edge pairs with the following rule. The first node-edge pair is $(v_0,v_0v_1)$. For a node-edge pair $(u,uv)$, if $uv\notin T$, then the next node-edge pair is $(u,uv^+)$. If $uv\in T$, then the next node-edge pair is $(v,vu^+)$. The sequence ends when the next node-edge pair would again be $(v_0,v_0v_1)$. (Note that the orientations do not matter in the definition.) By \cite[Lemma 5]{Bernardi_Tutte}, each incident node-edge pair occurs exactly once in this sequence.

If our digraph $G$ is semi-balanced, then we can use our ribbon structure and the resulting
tours of our trees to define a specific dissecting tree set. We call a spanning tree $T$ of a 
directed graph 
a \emph{Jaeger tree}, if in the tour of $T$, for every non-edge $\overrightarrow{uv}$ of $T$, $(u,\overrightarrow{uv})$ precedes $(v,\overrightarrow{uv})$. 
It is proved in \cite{semibalanced} that (for any ribbon structure and basis of a semi-balanced digraph) Jaeger trees form a dissecting tree set.
Example \ref{ex:coeulerian} is an instance of this result.

Let us now return to arbitrary digraphs and define embedding semi-activities.
For any spanning tree $T$, the tour of $T$ induces the following ordering $\leq_T$ on the edges of $G$: For 
$\overrightarrow{t_1h_1}$ and $\overrightarrow{t_2h_2}$, we put $\overrightarrow{t_1h_1}\leq_T \overrightarrow{t_2h_2}$ if $(t_1,t_1h_1)$ precedes $(t_2,t_2h_2)$ in the tour of $T$.

We say that an edge $e\in T$ is \emph{internally embedding semi-passive} in $T$ (with respect to the ribbon structure and basis $(v_0,v_0v_1)$) if it is internally semi-passive with respect to the edge ordering $\leq_T$ in the sense of Definition \ref{def:semiactive}. We define the \emph{internal embedding semi-passivity} of $T$ as the number of its internally embedding semi-passive edges. As in general different trees $T$ induce different orders $\leq_T$, internal embedding semi-passivities are not determined by a fixed edge ordering. Internal embedding semi-passivities are the analogues, and in some sense generalizations, of the embedding passivities of Bernardi \cite{Bernardi_Tutte} that he used to give an alternative description of the Tutte polynomial.

We also define another notion of activity: Let us fix a vertex $v_0$ of the directed graph $G$, and call it the base point. For a spanning tree $T$, we say that an edge $e\in T$ is \emph{internally basepoint-passive} if $e$ points away from $v_0$ in $T$, furthermore the fundamental cut $C^*(T,e)$ is not a directed cut (in $G$).

\begin{remark}
\label{rem:bidirected_activity}
Note that in a bidirected graph no cut is directed, whence for bidirected graphs, basepoint-passivity of $e$ in $T$ is equivalent to $e$ pointing away from $v_0$ in $T$. That is exactly the notion of passivity used in Theorem \ref{thm:sym_edge_h-vector}.
\end{remark}

It is proved in \cite[Lemma 6.4]{semibalanced} that for edges of Jaeger trees of semi-balanced digraphs, embedding semi-passivity is equivalent to basepoint-passivity (where we use the same ribbon structure and basis for defining the Jaeger trees and the embedding semi-passivities, and we use the vertex from the basis
to define basepoint-passivities).
Moreover, in \cite{semibalanced} we proved that the generating function of internal embedding semi-passivities for Jaeger trees gives the $h^*$-polynomial of the root polytope $\mathcal{Q}_G$.
However, it is easy to see that in general the two activity notions are different.

Here we show that if we consider an arbitrary dissecting tree set for our semi-balanced digraph, then neither internal embedding semi-passivities, nor basepoint-passivities give the $h^*$-polynomial of $\mathcal{Q}_G$ anymore.

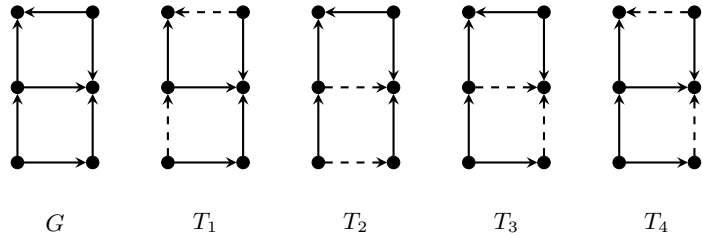
\begin{figure}
	\begin{center}
	\begin{tikzpicture}[scale=.2]
	\begin{scope}[shift={(-20,0)}]
	\node [circle,fill,scale=.5,draw] (1) at (0,10) {};
	\node [circle,fill,scale=.5,draw] (2) at (5,10) {};
	\node [circle,fill,scale=.5,draw] (3) at (0,5) {};
	\node [circle,fill,scale=.5,draw] (4) at (5,5) {};
	\node [circle,fill,scale=.5,draw] (5) at (0,0) {};		
	\node [circle,fill,scale=.5,draw] (6) at (5,0) {};		
	\path [thick,<-,>=stealth] (1) edge [left] node {} (2);
	\path [thick,->,>=stealth] (3) edge [above] node {} (4);
	\path [thick,->,>=stealth] (5) edge [right] node {} (6);
	\path [thick,->,>=stealth] (3) edge [below] node {} (1);
	\path [thick,<-,>=stealth] (4) edge [right] node {} (2);
	\path [thick,->,>=stealth] (5) edge [below] node {} (3);
	\path [thick,->,>=stealth] (6) edge [above] node {} (4);
	\node [] (0) at (2.5,-4) {\small $G$};
	\end{scope}
	\begin{scope}[shift={(-10,0)}]
	\node [circle,fill,scale=.5,draw] (1) at (0,10) {};
	\node [circle,fill,scale=.5,draw] (2) at (5,10) {};
	\node [circle,fill,scale=.5,draw] (3) at (0,5) {};
	\node [circle,fill,scale=.5,draw] (4) at (5,5) {};
	\node [circle,fill,scale=.5,draw] (5) at (0,0) {};		
	\node [circle,fill,scale=.5,draw] (6) at (5,0) {};		
	\path [thick,dashed,<-,>=stealth] (1) edge [left] node {} (2);
	\path [thick,->,>=stealth] (3) edge [above] node {} (4);
	\path [thick,->,>=stealth] (5) edge [right] node {} (6);
	\path [thick,->,>=stealth] (3) edge [below] node {} (1);
	\path [thick,<-,>=stealth] (4) edge [right] node {} (2);
	\path [thick,dashed,->,>=stealth] (5) edge [below] node {} (3);
	\path [thick,->,>=stealth] (6) edge [above] node {} (4);
	\node [] (0) at (2.5,-4) {\small $T_1$};
	\end{scope}
	\begin{scope}[shift={(0,0)}]
	\node [circle,fill,scale=.5,draw] (1) at (0,10) {};
	\node [circle,fill,scale=.5,draw] (2) at (5,10) {};
	\node [circle,fill,scale=.5,draw] (3) at (0,5) {};
	\node [circle,fill,scale=.5,draw] (4) at (5,5) {};
	\node [circle,fill,scale=.5,draw] (5) at (0,0) {};		
	\node [circle,fill,scale=.5,draw] (6) at (5,0) {};		
	\path [thick,<-,>=stealth] (1) edge [left] node {} (2);
	\path [thick,dashed,->,>=stealth] (3) edge [above] node {} (4);
	\path [thick,dashed,->,>=stealth] (5) edge [right] node {} (6);
	\path [thick,->,>=stealth] (3) edge [below] node {} (1);
	\path [thick,<-,>=stealth] (4) edge [right] node {} (2);
	\path [thick,->,>=stealth] (5) edge [below] node {} (3);
	\path [thick,->,>=stealth] (6) edge [above] node {} (4);
	\node [] (0) at (2.5,-4) {\small $T_2$};
	\end{scope}
	\begin{scope}[shift={(10,0)}]
	\node [circle,fill,scale=.5,draw] (1) at (0,10) {};
	\node [circle,fill,scale=.5,draw] (2) at (5,10) {};
	\node [circle,fill,scale=.5,draw] (3) at (0,5) {};
	\node [circle,fill,scale=.5,draw] (4) at (5,5) {};
	\node [circle,fill,scale=.5,draw] (5) at (0,0) {};		
	\node [circle,fill,scale=.5,draw] (6) at (5,0) {};		
	\path [thick,<-,>=stealth] (1) edge [left] node {} (2);
	\path [thick,dashed,->,>=stealth] (3) edge [above] node {} (4);
	\path [thick,->,>=stealth] (5) edge [right] node {} (6);
	\path [thick,->,>=stealth] (3) edge [below] node {} (1);
	\path [thick,<-,>=stealth] (4) edge [right] node {} (2);
	\path [thick,->,>=stealth] (5) edge [below] node {} (3);
	\path [thick,dashed,->,>=stealth] (6) edge [above] node {} (4);
	\node [] (0) at (2.5,-4) {\small $T_3$};
	\end{scope}
	\begin{scope}[shift={(20,0)}]
	\node [circle,fill,scale=.5,draw] (1) at (0,10) {};
	\node [circle,fill,scale=.5,draw] (2) at (5,10) {};
	\node [circle,fill,scale=.5,draw] (3) at (0,5) {};
	\node [circle,fill,scale=.5,draw] (4) at (5,5) {};
	\node [circle,fill,scale=.5,draw] (5) at (0,0) {};		
	\node [circle,fill,scale=.5,draw] (6) at (5,0) {};		
	\path [thick,dashed,<-,>=stealth] (1) edge [left] node {} (2);
	\path [thick,->,>=stealth] (3) edge [above] node {} (4);
	\path [thick,->,>=stealth] (5) edge [right] node {} (6);
	\path [thick,->,>=stealth] (3) edge [below] node {} (1);
	\path [thick,<-,>=stealth] (4) edge [right] node {} (2);
	\path [thick,->,>=stealth] (5) edge [below] node {} (3);
	\path [thick,->,>=stealth,dashed] (6) edge [above] node {} (4);
	\node [] (0) at (2.5,-4) {\small $T_4$};
	\end{scope}
	\end{tikzpicture}
	\end{center}
	\caption{A semi-balanced digraph (first panel), and four of its spanning trees (panels 2-5) that form a triangulating tree set.}
	\label{fig:counterex}
\end{figure}

Take the semi-balanced digraph $G$ depicted in the left panel of Figure \ref{fig:counterex}. 
We claim that the trees $T_1, \dots, T_4$ of Figure \ref{fig:counterex} form a dissecting tree set for $G$.
We have already seen in Example \ref{ex:semibalanced_fixed} that a dissecting tree set for this graph has to have four trees.
Hence by Proposition \ref{prop:dissecting_necessary_condition}, it suffices to find a cut for any two of the trees such that one of them only intesects the cut in edges going in one of the directions, while the other tree only intersects the cut in edges going in the other direction.
It is not hard to find such cuts for the trees $T_1, \dots, T_4$. For example, for $T_1$ and $T_2$, the elementary cut consisting of the three horizontal edges is suitable.

We note that the set of trees $T_1, \dots, T_4$ is even triangulating. For this, one can check that the condition obtained in \cite[Lemma 8.7]{semibalanced} holds.

As we saw in Example \ref{ex:semibalanced_fixed}, the $h^*$-polynomial of $\mathcal{Q}_G$ is $x^2+2x+1$.
However, notice that if we choose the lower left vertex of $G$ as the basepoint $v_0$, then both $T_1$ and $T_2$ have $0$ edges pointing away from $v_0$ such that their fundamental cut is not directed. 
That is, there are two trees with vanishing basepoint-passivity, whereas the constant term being $1$ in the $h^*$-polynomial would predict just one such tree.
Hence the statistic of basepoint-passivities does not give the $h^*$-polynomial for an arbitrary dissection, indeed not even for an arbitrary triangulation.

Now let us endow $G$ with the ribbon structure induced by the positive orientation of the plane. Moreover, 
let us specify a basis $(v_0,v_0v_1)$ by
keeping
the lower left vertex as $v_0$ and letting $v_1$ be the lower right vertex. Then 
one can check that $T_2$ has internal embedding semi-passivity $0$, while the trees $T_1$, $T_3$, and $T_4$ have internal embedding semi-passivity $1$. Hence we do not get the correct $h^*$-polynomial for this notion of activity, either.

Regarding symmetric edge polytopes $P_\cG$ of undirected graphs $\cG$, we saw in Theorem \ref{thm:sym_edge_h-vector} (cf.\ Remark \ref{rem:bidirected_activity}) that basepoint-passivities do give the $h^*$-polynomial for any dissecting tree set.
Let us now show that internal embedding semi-passivities do not necessarily yield the $h^*$-polynomial of $P_\cG$.

For this, let us return to the graph $K_3$. Figure \ref{fig:triangle_counterex} shows its unique dissecting tree set (cf.\ Example \ref{ex:K_3_computation}),
together with the orderings $\leq_T$ associated to the spanning trees $T$ when 
considering the ribbon structure induced by the positive orientation of the plane, the lower right vertex as base vertex, and base edge pointing from the upper vertex to the lower right vertex.
Notice that the first and third spanning trees both have internal embedding semi-passivity $2$; hence, as $x^2$ has coefficient $1$ in $h^*_{K_3}$ (by Example \ref{ex:K_3_computation}), this statistic does not give the correct $h^*$-polynomial. As there is only one possible choice of dissecting tree set for $K_3$, in fact, it does not give the $h^*$-polynomial for \emph{any} dissecting tree set.

\begin{figure}
    \centering
    \begin{tikzpicture}[scale=1]
    \tikzstyle{o}=[circle,fill,scale=.8,draw]
	
	\begin{scope}[shift={(-3.2,0)}]
	\node [o] (1) at (0,0) {};
	\node [o] (2) at (1,1.5) {};
	\node [o] (3) at (2,0) {};
	
	\node [] (7) at (2.3,-0.1) {\small $v_0$};	
	\path [thick,->,>=stealth,dashed] (1) edge [left,bend left=20] node {\small 4} (2);
	\path [thick,->,>=stealth,dashed] (2) edge [left,bend left=20] node {\small 1} (1);
	\path [ultra thick,->,>=stealth,] (1) edge [above,bend left=20] node {\small 5} (3);
	\path [thick,->,>=stealth,dashed] (3) edge [below,bend left=20] node {\small 6} (1);
	\path [ultra thick,->,>=stealth,] (2) edge [right,bend left=20] node {\small 2} (3);
	\path [thick,->,>=stealth,dashed] (3) edge [right,bend left=20] node {\small 3} (2);
	\end{scope}
	\begin{scope}[shift={(0,0)}]
	\node [o] (1) at (0,0) {};
	\node [o] (2) at (1,1.5) {};
	\node [o] (3) at (2,0) {};
	\node [] (7) at (2.3,-0.1) {\small $v_0$};	
	\path [thick,->,>=stealth,dashed] (1) edge [left,bend left=20] node {\small 6} (2);
	\path [thick,->,>=stealth,dashed] (2) edge [left,bend left=20] node {\small 3} (1);
	\path [thick,->,>=stealth,dashed] (1) edge [below,bend left=20] node {\small 5} (3);
	\path [ultra thick,->,>=stealth,] (3) edge [below,bend left=20] node {\small 4} (1);
	\path [thick,->,>=stealth,dashed] (2) edge [left,bend left=20] node {\small 2} (3);
	\path [ultra thick,->,>=stealth,] (3) edge [left,bend left=20] node {\small 1} (2);
	\end{scope}
	\begin{scope}[shift={(3.2,0)}]
	\node [o] (1) at (0,0) {};
	\node [o] (2) at (1,1.5) {};
	\node [o] (3) at (2,0) {};
	\node [] (7) at (2.3,-0.1) {\small $v_0$};	
	\path [thick,->,>=stealth,dashed] (1) edge [left,bend left=20] node {\small 6} (2);
	\path [ultra thick,->,>=stealth,] (2) edge [left,bend left=20] node {\small 5} (1);
	\path [thick,->,>=stealth,dashed] (1) edge [below,bend left=20] node {\small 3} (3);
	\path [ultra thick,->,>=stealth,] (3) edge [below,bend left=20] node {\small 2} (1);
	\path [thick,->,>=stealth,dashed] (2) edge [left,bend left=20] node {\small 4} (3);
	\path [thick,->,>=stealth,dashed] (3) edge [left,bend left=20] node {\small 1} (2);
	\end{scope}
	\begin{scope}[shift={(-3.2,-2.4)}]
	\node [o] (1) at (0,0) {};
	\node [o] (2) at (1,1.5) {};
	\node [o] (3) at (2,0) {};
	\node [] (7) at (2.3,-0.1) {\small $v_0$};	
	\path [ultra thick,->,>=stealth,] (1) edge [left,bend left=20] node {\small 2} (2);
	\path [thick,->,>=stealth,dashed] (2) edge [left,bend left=20] node {\small 3} (1);
	\path [ultra thick,->,>=stealth,] (1) edge [above,bend left=20] node {\small 5} (3);
	\path [thick,->,>=stealth,dashed] (3) edge [below,bend left=20] node {\small 6} (1);
	\path [thick,->,>=stealth,dashed] (2) edge [right,bend left=20] node {\small 4} (3);
	\path [thick,->,>=stealth,dashed] (3) edge [right,bend left=20] node {\small 1} (2);
	\end{scope}
	\begin{scope}[shift={(0,-2.4)}]
	\node [o] (1) at (0,0) {};
	\node [o] (2) at (1,1.5) {};
	\node [o] (3) at (2,0) {};
	\node [] (7) at (2.3,-0.1) {\small $v_0$};	
	\path [ultra thick,->,>=stealth,] (1) edge [left,bend left=20] node {\small 4} (2);
	\path [thick,->,>=stealth,dashed] (2) edge [left,bend left=20] node {\small 5} (1);
	\path [thick,->,>=stealth,dashed] (1) edge [below,bend left=20] node {\small 3} (3);
	\path [thick,->,>=stealth,dashed] (3) edge [below,bend left=20] node {\small 6} (1);
	\path [thick,->,>=stealth,dashed] (2) edge [left,bend left=20] node {\small 2} (3);
	\path [ultra thick,->,>=stealth,] (3) edge [left,bend left=20] node {\small 1} (2);
	\end{scope}
	\begin{scope}[shift={(3.2,-2.4)}]
	\node [o] (1) at (0,0) {};
	\node [o] (2) at (1,1.5) {};
	\node [o] (3) at (2,0) {};
	\node [] (7) at (2.3,-0.1) {\small $v_0$};	
	\path [thick,->,>=stealth,dashed] (1) edge [left,bend left=20] node {\small 2} (2);
	\path [ultra thick,->,>=stealth,] (2) edge [left,bend left=20] node {\small 1} (1);
	\path [thick,->,>=stealth,dashed] (1) edge [below,bend left=20] node {\small 3} (3);
	\path [thick,->,>=stealth,dashed] (3) edge [below,bend left=20] node {\small 6} (1);
	\path [ultra thick,->,>=stealth,] (2) edge [left,bend left=20] node {\small 4} (3);
	\path [thick,->,>=stealth,dashed] (3) edge [left,bend left=20] node {\small 5} (2);
	\end{scope}
	\end{tikzpicture}
    \caption{Edge orderings
    associated to various directed spanning trees 
    of $K_3$ via a ribbon structure.}
    \label{fig:triangle_counterex}
\end{figure}
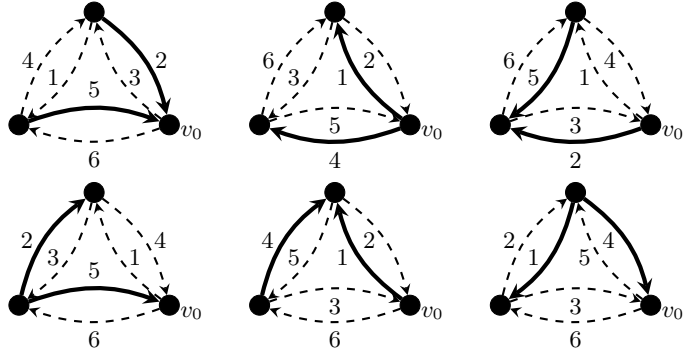

\begin{problem}
Characterize spanning tree statistics that yield the $h^*$-poly\-nomial of the root polytope of a semi-balanced digraph/symmetric edge polytope for any dissecting tree set. 
\end{problem}

\begin{problem}
What is the reason that for the root polytope of a semi-balanced digraph, internal embedding semi-passivities give the $h^*$-polynomial in the case of the Jaeger tree dissection, but not for an arbitrary dissection?
\end{problem}

\bibliographystyle{plain}
\bibliography{Bernardi}

\end{document}